\newtheorem{theorem}{Theorem}
\numberwithin{theorem}{section}
\newtheorem{corollary}[theorem]{Corollary}
\newtheorem{lemma}[theorem]{Lemma}
\theoremstyle{definition}
\newtheorem{definition}[theorem]{Definition}
\newcommand{\rca}{\mathbf{RCA}}
\newcommand{\aca}{\mathbf{ACA}}
\newcommand{\supp}{\operatorname{supp}}
\newcommand{\otp}{\operatorname{otp}}
\newcommand{\en}{\operatorname{en}}
\begin{document}

\keywords{Normal function, derivative, ordinal exponentiation, dilator, reverse mathematics.}
\subjclass[2010]{03B30, 03D60, 03E10, 03F15.}

\title[A note on ordinal exponentiation and derivatives of normal functions]{A note on ordinal exponentiation and\\ derivatives of normal functions}

\author{Anton Freund}
\address{Fachbereich Mathematik, Technische Universit\"at Darmstadt, Schlossgartenstr.~7, 64289~Darmstadt, Germany}
\email{freund@mathematik.tu-darmstadt.de}

\begin{abstract}
Michael Rathjen and the present author have shown that $\Pi^1_1$-bar induction is equivalent to (a suitable formalization of) the statement that every normal function has a derivative, provably in $\aca_0$. In this note we show that the base theory can be weakened to $\rca_0$. Our argument makes crucial use of a normal function~$f$ with $f(\alpha)\leq 1+\alpha^2$ and $f'(\alpha)=\omega^{\omega^\alpha}$. We will also exhibit a normal function $g$ with $g(\alpha)\leq 1+\alpha\cdot 2$ and~$g'(\alpha)=\omega^{1+\alpha}$.
\end{abstract}

\maketitle
{\let\thefootnote\relax\footnotetext{\copyright~2020 The Authors. \emph{Mathematical Logic Quarterly} published by Wiley-VCH GmbH.\\
This is the accepted version of a publication in \emph{Mathematical Logic Quarterly} 66:3 (2020)~326-335. Please cite the official journal publication, which is an open access article under the terms of the Creative Commons Attribution-NonCommercial-NoDerivs License.}

\section{Introduction}

A function $f$ from ordinals to ordinals is called normal if it is strictly increasing and continuous at limit stages. The latter means that we have $f(\lambda)=\sup_{\alpha<\lambda}f(\alpha)$ whenever~$\lambda$ is a limit ordinal. For any normal function~$f$, the class $\{\alpha\,|\,f(\alpha)=\alpha\}$ of fixed points is closed and unbounded. The strictly increasing enumeration of these fixed points is itself a normal function, which is called the derivative $f'$ of~$f$. In the present paper we will only be concerned with normal functions that map countable ordinals to countable ordinals. These play an important role in proof theory and have interesting computability theoretic properties (see~\cite{schuette77,marcone-montalban}).

In many investigations of normal functions, ordinal exponentiation is presupposed as a starting point. Most notably, the first function in the Veblen hierarchy is usually defined as $\varphi_0(\alpha)=\omega^\alpha$ (see e.\,g.~\cite{schuette77}). This makes a lot of sense in the context of ordinal notation systems, since a non-zero ordinal is of the form $\omega^\alpha$ if and only if it is closed under addition. On the other hand, ordinal exponentiation does itself presuppose certain set existence principles, as the following result from reverse mathematics shows (see below for an introduction):

\begin{theorem}[J.-Y.~Girard~\cite{girard87}, J.~Hirst~\cite{hirst94}]\label{thm:girard-hirst}
 The following are equivalent over the base theory $\rca_0$:
 \begin{itemize}
  \item arithmetical comprehension (i.\,e.~the principal axiom of $\aca_0$),
  \item if $(X,<_X)$ is a well-order, then so is
  \begin{equation*}
   2^X=\{\langle x_1,\dots,x_n\rangle\,|\,x_1,\dots,x_n\in X\text{ and }x_n<_X\dots<_X x_1\}
  \end{equation*}
  with the lexicographic order.
 \end{itemize}
\end{theorem}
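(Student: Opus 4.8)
The plan is to prove the two bullets equivalent over $\rca_0$ by establishing each implication separately. Throughout I identify an element of $2^X$ with the strictly $<_X$-decreasing tuple $\langle x_1,\dots,x_n\rangle$ that represents it, and I use the basic feature of the lexicographic order that a tuple is strictly larger than each of its proper initial segments. I write $\wo(\cdot)$ for the assertion that a given linear order admits no strictly descending $\omega$-sequence, and I note at the outset that the inclusion $x\mapsto\langle x\rangle$ embeds $X$ into $2^X$, so the implication from $\wo(2^X)$ to $\wo(X)$ is immediate; the content lies in the two directions below.

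For the implication from arithmetical comprehension to well-ordering preservation I would argue by contraposition: from a strictly descending $h\colon\mathbb N\to 2^X$ I extract a strictly descending sequence in $X$, contradicting $\wo(X)$. Writing $h(n)=\langle h(n)_0,h(n)_1,\dots\rangle$, the leading entries $h(n)_0$ form a weakly $<_X$-decreasing sequence. First I would show it is eventually constant: the set of strict-descent positions $\{n : h(n+1)_0<_X h(n)_0\}$ is decidable, hence exists already in $\rca_0$, and were it infinite it would enumerate a descending sequence in $X$. Let $b_0$ be the eventual value and $N_0$ a stabilization stage; deleting the common first entry from the terms with $n\ge N_0$ yields a strictly descending sequence in $2^{\{x\,:\,x<_X b_0\}}$, to which the same analysis applies. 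Iterating produces $b_0>_X b_1>_X\cdots$, and one checks the process cannot halt, since a cofinally empty tail would force two terms of $h$ to coincide. The essential point — and the only place where arithmetical comprehension is genuinely used — is that each stabilization stage $N_i$ is merely known to exist and is not computable from $h$ together with the earlier data; the construction iterates a non-uniform step $\omega$ many times, and assembling the single sequence $\langle(b_i,N_i)\rangle_{i\in\mathbb N}$ (equivalently, the descent in $X$) is exactly an instance of arithmetical comprehension. Over $\rca_0$ alone the iterates need not cohere into a set, which is consistent with the failure of the implication below $\aca_0$.

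For the reversal I would use the standard fact that, over $\rca_0$, arithmetical comprehension follows once one shows that the range of an arbitrary injection $f\colon\mathbb N\to\mathbb N$ exists as a set. Given such an $f$, I would build a linear order $X_f$, uniformly and recursively in $f$ from the finite range-approximations $\{f(0),\dots,f(s)\}$, with two properties: (i) $\rca_0$ proves $\wo(X_f)$, because any descending sequence in $X_f$ can be converted into a descending sequence of natural numbers; and (ii) $\rca_0$ proves that $\wo(2^{X_f})$ implies the existence of $\rng(f)$. For (ii) the idea is to define, recursively in $f$, a sequence $d\colon\mathbb N\to 2^{X_f}$ whose non-descent positions record precisely when a new value enters the range of $f$. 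Applying the preservation principle gives $\wo(2^{X_f})$, whence $d$ cannot be strictly descending from any point on; so its non-descent positions $\{n : d(n+1)\ge_{\mathrm{lex}}d(n)\}$ form an infinite, decidable-from-$d$ set that encodes an enumeration of $\rng(f)$ together with witnesses, from which $\rng(f)$ is recovered as a set in $\rca_0$. This inverts the stabilization analysis of the forward direction: the passage $X\mapsto 2^X$ amplifies the non-uniform ``where does the enumeration stabilize'' information into honest well-foundedness.

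The main obstacle is the construction of $X_f$ in the reversal. One must arrange simultaneously that $\rca_0$ can verify $\wo(X_f)$, so that the preservation principle applies, while $\rca_0$ cannot by itself verify $\wo(2^{X_f})$, so that the latter carries the arithmetical content of $\rng(f)$. Balancing these requires $X_f$ to be provably well-founded yet not provably isomorphic to any fixed notation, with the dependence on $f$ hidden exactly in the data that the exponential magnifies; verifying properties (i) and (ii) for the explicit order is the technical heart of the argument. The forward direction, by contrast, is a comparatively routine descending-sequence extraction once the role of arithmetical comprehension in collecting the iterates is isolated.
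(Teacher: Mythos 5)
First, note that the paper does not prove this theorem at all: it is quoted as a known result of Girard and Hirst, so there is no in-paper proof to compare against. Judged on its own terms, your forward direction is the standard argument and is essentially sound: the first entries of a descending sequence in $2^X$ stabilize, one deletes them and iterates, and the resulting $b_0>_Xb_1>_X\cdots$ contradicts $\wo(X)$. One point you should make explicit is \emph{why} a single instance of arithmetical comprehension suffices to collect all the pairs $(b_i,N_i)$: each stage produces only numbers, so the course-of-values up to $i$ is a finite sequence of numbers and the graph of $i\mapsto(b_i,N_i)$ is defined by one arithmetical formula of fixed complexity (roughly, ``there is a finite sequence of pairs, each step of which satisfies the $\Sigma^0_2$ recursion condition''). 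Without this observation the phrase ``iterates a non-uniform step $\omega$ many times'' could equally describe a recursion whose complexity grows with $i$, which $\aca_0$ does \emph{not} handle.

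The reversal, however, has a genuine gap: the linear order $X_f$ and the sequence $d\colon\mathbb N\to 2^{X_f}$ are never constructed, and you yourself identify their construction and verification as ``the technical heart of the argument.'' What you have written is a list of desiderata, not a proof. Moreover, the intended extraction of $\rng(f)$ is underspecified even as a plan: knowing that $d$ is not eventually strictly descending gives you infinitely many non-descent positions, but to recover $\rng(f)$ \emph{as a set} in $\rca_0$ you must be able to decide, for each $k$, whether $k\in\rng(f)$, which requires that each non-descent position certify that the enumeration of the range has stabilized below some explicitly growing threshold. Arranging this is precisely what the (missing) definitions of $X_f$ and $d$ must accomplish; in the classical arguments one takes $X_f$ to be $\mathbb N$ reordered by comparing $f$-values (so that $\rca_0$ proves $\wo(X_f)$ by pulling a descending sequence back to a descending sequence of natural numbers, while the position of an element in $X_f$ encodes range information), and $d(s)$ to be a decreasing tuple built from the stage-$s$ approximation, engineered so that a strict descent occurs exactly when a new value disturbs the approximation below the current threshold. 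Until you supply such definitions and verify clauses (i) and (ii) for them, the implication from the well-ordering principle to arithmetical comprehension is not established.
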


Note that elements of $2^X$ correspond to ordinals in base-$2$ Cantor normal form. In particular, $2^X$ has order type $2^\alpha$ (as usually defined in ordinal arithmetic) if $X$ has order type~$\alpha$. The theorem is also valid with base $\omega$ (recall $\omega^\alpha=2^{\omega\cdot\alpha}$), but base $2$ will have technical advantages in the following.

Theorem~\ref{thm:girard-hirst} has been formulated as a result of reverse mathematics. In this research program one investigates implications between foundational and mathematical principles that can be expressed in the language of second order arithmetic. Implications and equivalences are proved over some weak base theory, most commonly in~$\rca_0$. The latter can handle primitive recursive constructions relative to an oracle. More specifically, the acronym $\rca_0$ alludes to the recursive comprehension axiom, which allows to form $\Delta^0_1$-definable subsets of~$\mathbb N$. Furthermore, $\rca_0$ allows induction for induction formulas of complexity $\Sigma^0_1$. For a detailed introduction to reverse mathematics we refer to~\cite{simpson09}.

The case of Theorem~\ref{thm:girard-hirst} shows that it is relatively straightforward to consider specific normal functions in reverse mathematics. It is considerably more difficult to express statements that quantify over all normal functions, or at least over a sufficiently rich class. In order to do so, one needs a general representation of normal functions by subsets of the natural numbers. Such a representation is possible via J.-Y.~Girard's~\cite{girard-pi2} notion of dilator and related work by P.~Aczel~\cite{aczel-phd,aczel-normal-functors}. Full details have been worked out in~\cite[Section~2]{freund-rathjen_derivatives}; we will recall them as they become relevant for the present paper. Relative to the representation of normal functions in second order arithmetic, M.~Rathjen and the present author have shown that the following are equivalent over $\aca_0$ (see~\cite[Theorem~5.9]{freund-rathjen_derivatives}):
\begin{enumerate}
 \item Every normal function has a derivative.
 \item The principle of $\Pi^1_1$-bar induction (also called transfinite induction) holds.
\end{enumerate}
Considering the proof given in~\cite{freund-rathjen_derivatives}, we see that the implication from (1) to (2) uses arithmetical comprehension (in the form of the Kleene normal form theorem, cf.~\cite[Lemma~V.1.4]{simpson09}). The proof that (2) implies (1) is carried out in $\rca_0$. In any case, a result of J.~Hirst~\cite{hirst99} shows that (2) implies arithmetical comprehension (the author is grateful to E.~Frittaion for pointing this out). To establish the equivalence between (1) and (2) over $\rca_0$ it remains to show that~(1) implies arithmetical comprehension as well. This is the main result of the present paper.

Concerning terminology, we use ``bar induction" and ``transfinite induction" synonymously, since this coincides with the usage in other papers on the reverse mathematics of well ordering principles (cf.~e.\,g.~\cite{rathjen-model-bi}). At the same time, we appreciate that bar induction is a conceptually different notion in constructive mathematics.

In the rest of this introduction we sketch the proof that statement (1) above implies arithmetical comprehension. Since we have not yet explained the representation of normal functions in second order arithmetic, the following argument will be rather informal. Formal versions of all claims will be established in the following sections. The idea of the proof is to construct a normal function $f$ such that the following holds for any ordinal $\alpha$ (where $f'$ is the derivative of $f$):
\begin{enumerate}[label=(\roman*)]
 \item We have $f(\alpha)\leq 1+\alpha^2\leq(1+\alpha)^2$.
 \item We have $2^\alpha\leq f'(\alpha)$.
\end{enumerate}
Part~(i) is supposed to ensure that $\rca_0$ recognizes $f$ as a normal function (since it proves that~$(1+\alpha)^2$ is well-founded for any well-order~$\alpha$). Invoking~(1) from above, we obtain access to the well-founded values $f'(\alpha)$ of the derivative. The inequality in~(ii) corresponds to an order embedding of $2^\alpha$ into $f'(\alpha)$, which witnesses that $2^\alpha$ is also well-founded. By Theorem~\ref{thm:girard-hirst} this yields arithmetical comprehension.

Let us now show how clauses (i) and (ii) can be satisfied: Working in a sufficiently strong set theory, the required function $f$ can be described by
\begin{equation*}
f(\alpha)=1+\sum_{\gamma<\alpha}(1+\gamma).
\end{equation*}
More formally, this infinite sum corresponds to the recursive clauses
\begin{align*}
f(0)&=1,\\
f(\alpha+1)&=f(\alpha)+1+\alpha,\\
f(\lambda)&=\textstyle\sup_{\alpha<\lambda}f(\alpha)\qquad\text{for $\lambda$ limit},
\end{align*}
which immediately reveal that $f$ is normal. It might appear more natural to set $f(\alpha+1)=f(\alpha)+\alpha$ in the successor case (at least for $\alpha>0$), but the summand~$1$ will be crucial in the following sections. A straightforward induction on $\alpha$ shows that we have $f(\alpha)\leq1+\alpha^2$. The inequality $2^\alpha\leq f'(\alpha)$ is also proved by induction on $\alpha$: In view of $1=f(0)\leq f'(0)$ the claim holds for $\alpha=0$. In case $\alpha\neq 0$ we have
\begin{equation*}
2^\alpha=\sup\{2^\beta+\gamma\,|\,\beta<\alpha\text{ and }\gamma<2^\beta\}.
\end{equation*}
Given $\beta<\alpha$ and $\gamma<2^\beta$, the induction hypothesis yields
\begin{multline*}
2^\beta+\gamma< f'(\beta)+2^\beta\leq f(f'(\beta))+1+f'(\beta)={}\\
{}=f(f'(\beta)+1)\leq f(f'(\beta+1))=f'(\beta+1)\leq f'(\alpha),
\end{multline*}
which completes the induction step. When we formalize the proof, we will see that the use of transfinite induction can be avoided, which may be somewhat surprising.

The bound $2^\alpha\leq f'(\alpha)$ suffices to lower the base theory of~\cite[Theorem~5.9]{freund-rathjen_derivatives}, but it is not optimal: In the last section of this note we will establish~$f'(\alpha)=\omega^{\omega^\alpha}$. In ordinal arithmetic one is particularly interested in the function $\alpha\mapsto\omega^\alpha$, which is the most common starting function for the Veblen hierarchy (see e.\,g.~\cite{schuette77}). In this context it is natural to ask whether $\alpha\mapsto\omega^\alpha$ itself is the derivative of some normal function. This is not the case: For any normal function~$g$ we see that $g(0)=0$ implies $g'(0)=0<\omega^0$ while $g(0)>0$ implies $g(1)>1$ and then $g'(0)>1=\omega^0$. However, this value is the only obstruction: We will exhibit a normal function $g$ with $g(\alpha)\leq 1+\alpha\cdot 2$ and $g'(\alpha)=\omega^{1+\alpha}$.

\section{A normal function justified by recursive comprehension}

In the present section we recall how normal functions can be represented in second order arithmetic; further explanations and full details of all missing proofs can be found in~\cite[Section~2]{freund-rathjen_derivatives}. We then apply this representation to the normal function~$f$ that has been considered in the introduction.

To find a representation of normal functions, we need to understand how they can be determined by a countable amount of information. Clearly, normal functions are not determined by their values on some fixed countable set of arguments. This suggests to extend our functions into objects with more internal structure. For this purpose it is convenient to use some notions from category theory, namely those of category, functor and natural transformation. We will not need anything that goes beyond these basic concepts; an introduction to category theory (which is much too comprehensive for our purpose) can be found in~\cite{maclane-working}.

The category of linear orders consists of the linear orders as objects and the embeddings (strictly increasing functions) as morphisms. We will be particularly interested in the finite orders $n=\{0,\dots,n-1\}$ (with the usual order relation). These orders and the embeddings between them form the category of natural numbers. Girard's~\cite{girard-pi2} idea was to consider particularly uniform functors from linear orders to linear orders; those functors that preserve well-foundedness are called dilators. Due to their uniformity, dilators are essentially determined by their restrictions to the category of natural numbers. Provided that these restrictions are countable, they can be used to represent normal functions in reverse mathematics.

In order to describe the uniformity property of dilators, we consider the finite subset functor on the category of sets, which is given by
\begin{align*}
[X]^{<\omega}&=\text{``the set of finite subsets of $X$"},\\
[f]^{<\omega}(a)&=\{f(x)\,|\,x\in a\},
\end{align*}
where the second clause refers to $f:X\to Y$ and $a\in[X]^{<\omega}$. The cardinality of a finite set~$a$ will be denoted by~$|a|\in\mathbb N$ (which can in turn stand for the finite order~$\{0,\dots,|a|-1\}$). The following is essentially due to Girard~\cite{girard-pi2}; we refer to~\cite[Remark~2.2.2]{freund-thesis} for a detailed comparison with his original definition.

\begin{definition}[$\rca_0$]\label{def:prae-dil}
A prae-dilator consists of
\begin{enumerate}[label=(\roman*)]
\item a functor $T$ from natural numbers to linear orders, such that each order $T(n)=(T(n),<_{T(n)})$ has field $T(n)\subseteq\mathbb N$, and
\item a natural transformation $\supp:T\Rightarrow[\cdot]^{<\omega}$ that satisfies the following support condition: Each $\sigma\in T(n)$ lies in the image of the embedding~$T(\en_\sigma)$, where $\en_\sigma:|\supp_n(\sigma)|\to n$ is the strictly increasing enumeration of the set $\supp_n(\sigma)\subseteq n=\{0,\dots,n-1\}$.
\end{enumerate}
\end{definition}

In second order arithmetic, the function $n\mapsto T(n)$ can be represented by the set $T^0=\{(n,\sigma)\,|\,\sigma\in T(n)\}$; the latter is a subset of~$\mathbb N$ if we code each pair by a single number. Officially, this turns $\sigma\in T(n)$ into an abbreviation for $(n,\sigma)\in T^0$, which can be expressed by a formula that is $\Delta^0_1$ in~$\rca_0$. Assuming that finite sets and functions are coded by natural numbers, the action $f\mapsto T(f)$ on morphisms and the map $(n,\sigma)\mapsto\supp_n(\sigma)$ can be represented in the same way.

Above we have mentioned that certain functors on linear orders are determined by their restrictions to the category of natural numbers. Conversely, we now explain how a prae-dilator can be extended into a functor on linear orders. In $\rca_0$ we define
\begin{equation}\label{eq:extend-dil}
D^T(X)=\{\langle a,\sigma\rangle\,|\,a\in[X]^{<\omega}\text{ and }\sigma\in T(|a|)\text{ and }\supp_{|a|}(\sigma)=|a|\}
\end{equation}
for any prae-dilator $T=(T,\supp)$ and any linear order $X$. Informally speaking, the pair $\langle a,\sigma\rangle$ represents the element $T(\en_a)(\sigma)\in T(X)$, where $\en_a:|a|\to X$ is the increasing function with image $a\subseteq X$ (note that $T(\en_a)(\sigma)$ would make sense if~$T$ was defined on all linear orders). Due to the condition $\supp_{|a|}(\sigma)=|a|$, the representation is unique (we would have $a=\supp_X(T(\en_a)(\sigma))$ if $\supp$ was defined beyond the category of natural numbers). In order to define the appropriate order relation on $D^T(X)$, we introduce the following notation: Given an embedding $f:a\to b$ between finite orders, let $|f|:|a|\to|b|$ be the unique function that makes
\begin{equation*}
\begin{tikzcd}
{|a|}\arrow["\cong"]{r}\arrow[swap,"|f|"]{d} & a\arrow["f"]{d}\\
{|b|}\arrow["\cong"]{r} & b
\end{tikzcd}
\end{equation*}
a commutative diagram. We can now stipulate
\begin{equation*}
\langle a_0,\sigma_0\rangle <_{D^T(X)} \langle a_1,\sigma_1\rangle\quad:\Leftrightarrow\quad T(|\iota_0|)(\sigma_0)<_{T(|a_0\cup a_1|)} T(|\iota_1|)(\sigma_1),
\end{equation*}
where $\iota_i:a_i\hookrightarrow a_0\cup a_1$ are the inclusions. It is also possible to turn $D^T(\cdot)$ into a functor and to define natural support functions $\supp_X:D^T(X)\to[X]^{<\omega}$. In particular we can declare that $T$ is a dilator if and only if the order $D^T(X)$ is well-founded for any well-order~$X$ (the two obvious definitions of well-ordering are equivalent over $\rca_0$, see e.\,g.~\cite[Lemma~2.3.12]{freund-thesis}). From the viewpoint of a sufficiently strong set theory, each dilator $T$ gives rise to a function $f_T$ from ordinals to ordinals, with
\begin{equation}\label{eq:induced-function}
f_T(\alpha)=\otp(D^T(\alpha)).
\end{equation}
Here we view $\alpha$ as a linear order and write $\otp(X)$ for the order type of~$X$. We can view $T$ as a representation of the function $f_T$ in second order arithmetic.

It is straightforward to specify a dilator $T$ with $f_T(\alpha)=\alpha+1$. In particular, the function $f_T$ does not need to be normal. The following condition, which was identified by Aczel~\cite{aczel-phd,aczel-normal-functors}, ensures that we are concerned with a normal function:

\begin{definition}[$\rca_0$]\label{def:normal-dil}
A normal (prae-)dilator consists of a (prae-)dilator $T$ and a natural family of embeddings $\mu_n:n\to T(n)$ such that
\begin{equation*}
\sigma<_{T(n)}\mu_n(m)\quad\Leftrightarrow\quad\supp_n(\sigma)\subseteq m=\{0,\dots,m-1\}
\end{equation*}
holds for all $\sigma\in T(n)$ and all $m<n$.
\end{definition}

Note that we necessarily have $\supp_1(\mu_1(0))=1$, since $\supp_1(\mu_1(0))=\emptyset$ would yield $\mu_1(0)<_{T(1)}\mu_1(0)$. This allows us to define $D^\mu_X:X\to D^T(X)$ by
\begin{equation}\label{eq:extend-normal}
D^\mu_X(x)=\langle\{x\},\mu_1(0)\rangle.
\end{equation}
One can show that we have
\begin{equation*}
\langle a,\sigma\rangle<_{D^T(X)}D^\mu_X(x)\quad\Leftrightarrow\quad a\subseteq X\!\restriction\!x=\{x'\in X\,|\,x'<_X x\}.
\end{equation*}
Hence the elements $D^\mu_X(x)$ are cofinal in $D^T(X)$ if $X$ has limit type. In a sufficiently strong set theory one can deduce that $f_T$ is normal (cf.~\cite[Proposition~2.12]{freund-rathjen_derivatives}).

In the introduction we have considered a normal function $f$ with
\begin{equation*}
f(\alpha)=1+\sum_{\gamma<\alpha}(1+\gamma).
\end{equation*}
Our next goal is to construct a normal dilator $F$ that represents this function. Given an order $X$, we write
\begin{equation*}
1+X=\{\bot\}\cup X
\end{equation*}
for the extension of $X$ by a new minimum element~$\bot$. To obtain a functor we map each embedding $h:X\rightarrow Y$ to the embedding $1+h:1+X\to 1+Y$ with
\begin{equation*}
(1+h)(x)=\begin{cases}
\bot & \text{if }x=\bot,\\
h(x) & \text{if }x\in X.
\end{cases}
\end{equation*}
In order to define a dilator $F$ we must specify a linear order $F(n)$ for each finite order~$n=\{0,\dots,n-1\}$. It will later be convenient to have a more general definition, which explains $F(X)$ for any linear order $X$.

\begin{definition}[$\rca_0$]\label{def:F}
For each linear order $X$ we define
\begin{equation*}
F(X)=1+\sum_{x\in 1+X}(1+X)\!\restriction\!x=\{\bot\}\cup\{\langle x,y\rangle\in(1+X)^2\,|\,y<_{1+X}x\}.
\end{equation*}
Note that $F(X)$ contains no pairs of the form $\langle \bot,y\rangle$, since $y<_{1+X}\bot$ must fail. To turn $F(X)$ into a linear order we declare that $\bot$ is minimal and that we have
\begin{equation*}
\langle x_0,y_0\rangle<_{F(X)}\langle x_1,y_1\rangle\quad\Leftrightarrow\quad\begin{cases}
\text{either $x_0<_X x_1$},\\
\text{or $x_0=x_1$ and $y_0<_{1+X}y_1$}.
\end{cases}
\end{equation*}
For an embedding $h:X\to Y$, define $F(f):F(X)\to F(Y)$ by $F(f)(\bot)=\bot$ and
\begin{equation*}
F(h)(\langle x,y\rangle)=\langle h(x),(1+h)(y)\rangle.
\end{equation*}
Each order $X$ gives rise to a function $\supp^F_X:F(X)\to[X]^{<\omega}$ with
\begin{equation*}
\supp^F_X(\bot)=\emptyset\qquad\text{and}\qquad\supp^F_X(\langle x,y\rangle)=\begin{cases}
\{x\} & \text{if }y=\bot,\\
\{x,y\} & \text{if }y\in X.
\end{cases}
\end{equation*}
Finally, we define functions $\mu^F_X:X\to F(X)$ by setting $\mu^F_X(x)=\langle x,\bot\rangle$.
\end{definition}

Note that the relations $\sigma\in F(n)$, $\sigma<_{F(n)}\tau$, $F(h)(\sigma)=\tau$ with $h:n\to m$, $a=\supp^F_n(\sigma)$ and $\sigma=\mu^F_n(m)$ are $\Delta^0_1$-definable in $\rca_0$. Hence the restriction of $F$ to the category of natural numbers exists as a set. It is straightforward to verify that Definitions~\ref{def:prae-dil} and~\ref{def:normal-dil} are satisfied (the condition $y<_{1+X}x$ in the definition of $F(X)$ is crucial for the latter):

\begin{lemma}[$\rca_0$]\label{lem:F-prae-dil}
Restricting Definition~\ref{def:F} to the category of natural numbers yields a normal prae-dilator~$F$.
\end{lemma}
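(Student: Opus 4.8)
The plan is to verify, one clause at a time, that the restriction of $F$ to the category of natural numbers meets Definitions~\ref{def:prae-dil} and~\ref{def:normal-dil}. I would begin with the structural requirements. That each $<_{F(n)}$ is a linear order is immediate: away from the adjoined global minimum $\bot$ it is simply the lexicographic order on pairs from $1+n$, so antisymmetry, transitivity and totality descend from the corresponding properties of $<_{1+n}$. Functoriality of $F$ reduces to functoriality of the operation $1+(\cdot)$ on embeddings, since $F(h)$ is defined componentwise through $h$ and $1+h$; checking $F(\id)=\id$ and $F(g\circ h)=F(g)\circ F(h)$ is then a direct comparison of the two definitions. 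One also checks that each $F(h)$ is strictly increasing — because $h$ and $1+h$ preserve order and the order on $F(n)$ is lexicographic — and recalls from the preceding remark that, under the pairing coding, the field of each $F(n)$ is a set of natural numbers.

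Next I would treat the support data. Naturality of $\supp^F$ amounts to the identity $\supp^F_m(F(h)(\sigma))=\{h(x)\,|\,x\in\supp^F_n(\sigma)\}$ for $h:n\to m$, which I verify by the three cases $\sigma=\bot$, $\sigma=\langle x,\bot\rangle$ and $\sigma=\langle x,y\rangle$ with $y\in n$; in each case both sides evaluate to the obvious finite set, since $1+h$ fixes $\bot$ and agrees with $h$ on $n$. For the support condition of Definition~\ref{def:prae-dil}(ii) I would compute $F$ on the small orders $0$, $1$ and $2$, finding $F(0)=\{\bot\}$, $F(1)=\{\bot,\langle 0,\bot\rangle\}$, and that $F(2)$ contains $\langle 1,0\rangle$. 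One then observes that every $\sigma\in F(n)$ is the image under $F(\en_\sigma)$ of the corresponding generator: $\bot$ from $\bot\in F(0)$, a pair $\langle x,\bot\rangle$ from $\langle 0,\bot\rangle\in F(1)$, and a pair $\langle x,y\rangle$ with $y\in n$ from $\langle 1,0\rangle\in F(2)$, using that $\en_\sigma$ enumerates $\supp^F_n(\sigma)$ in increasing order.

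Finally I would establish the normality clauses. That $\mu^F_n$ is a strictly increasing and natural family is immediate from $\mu^F_n(m)=\langle m,\bot\rangle$ together with the componentwise definition of $F(h)$. The substantive point is the equivalence $\sigma<_{F(n)}\mu^F_n(m)\Leftrightarrow\supp^F_n(\sigma)\subseteq\{0,\dots,m-1\}$. Unfolding the order, a pair $\langle x,y\rangle<_{F(n)}\langle m,\bot\rangle$ holds exactly when $x<m$ (the alternative $x=m$ with $y<_{1+n}\bot$ is vacuous), while $\bot$ lies below every $\mu^F_n(m)$ and carries empty support. It therefore remains to see that $\supp^F_n(\langle x,y\rangle)\subseteq\{0,\dots,m-1\}$ is equivalent to $x<m$ alone. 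This is exactly where the defining constraint $y<_{1+n}x$ enters: when $y\in n$ the support is $\{x,y\}$, and since $y<x$ the condition $y<m$ is already subsumed by $x<m$, so both the one- and two-element cases collapse to the single inequality $x<m$.

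I expect no genuine obstacle here, as every clause is a finite case analysis. The one place that demands care — and which the construction was designed to accommodate — is this last equivalence, where the inequality $y<_{1+X}x$ built into the field of $F(X)$ guarantees that bounding the larger coordinate of a support automatically bounds the smaller one, matching $\mu^F_n$ to $\supp^F$ precisely as Definition~\ref{def:normal-dil} requires.
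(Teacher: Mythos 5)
Your verification is correct and is exactly the argument the paper has in mind: the paper omits the proof of Lemma~\ref{lem:F-prae-dil}, stating only that the check of Definitions~\ref{def:prae-dil} and~\ref{def:normal-dil} is straightforward and that the condition $y<_{1+X}x$ is crucial for normality. You carry out precisely that case-by-case check — including the right generators $\bot\in F(0)$, $\langle 0,\bot\rangle\in F(1)$, $\langle 1,0\rangle\in F(2)$ for the support condition — and correctly isolate the role of $y<_{1+X}x$ in collapsing the support inclusion to the single inequality $x<m$, which is the one genuinely non-routine point.
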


To show that $F$ is a dilator we need to consider the ordered sets $D^F(X)$  from equation~(\ref{eq:extend-dil}). As a preparation, we relate $D^F(X)$ to the order $F(X)$ constructed in Definition~\ref{def:F}. Let us also recall that $\mu^F$ (or rather its restriction to the category of natural numbers) gives rise to a family of functions $D^{\mu^F}_X:X\to D^F(X)$, as defined by equation~(\ref{eq:extend-normal}). For later use, we relate these to the functions $\mu^F_X:X\to F(X)$.

\begin{lemma}[$\rca_0$]\label{lem:iso-reconstruct}
For each order~$X$ we have an isomorphism
\begin{equation*}
\eta_X:D^F(X)\xrightarrow{\cong} F(X)
\end{equation*}
with $\eta_X\circ D^{\mu^F}_X=\mu^F_X$.
\end{lemma}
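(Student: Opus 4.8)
The plan is to define $\eta_X$ as the canonical ``reconstruction'' map and then check that it is a bijective embedding compatible with the $\mu$-functions. Concretely, for $\langle a,\sigma\rangle\in D^F(X)$ I let $\en_a\colon|a|\to X$ be the increasing enumeration of $a$ and set
\[
\eta_X(\langle a,\sigma\rangle)=F(\en_a)(\sigma).
\]
This makes sense because Definition~\ref{def:F} provides $F$ as a functor on \emph{all} linear orders, so $F(\en_a)\colon F(|a|)\to F(X)$ is an embedding and the value lies in $F(X)$. A short case distinction shows that the only elements of $D^F(X)$ are $\langle\emptyset,\bot\rangle$, the pairs $\langle\{x\},\langle 0,\bot\rangle\rangle$, and the pairs $\langle\{x,y\},\langle 1,0\rangle\rangle$ with $y<_X x$: indeed the clause $\supp^F_{|a|}(\sigma)=|a|$ together with $|\supp^F_n(\tau)|\le 2$ forces $|a|\le 2$. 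On these three kinds of elements $\eta_X$ takes the values $\bot$, $\langle x,\bot\rangle$ and $\langle x,y\rangle$ respectively.

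First I would establish injectivity, which simultaneously identifies the inverse. The key is naturality of the support function, which is immediate from the explicit clauses in Definition~\ref{def:F}: it gives
\[
\supp^F_X\bigl(\eta_X(\langle a,\sigma\rangle)\bigr)=[\en_a]^{<\omega}\bigl(\supp^F_{|a|}(\sigma)\bigr)=[\en_a]^{<\omega}(|a|)=a,
\]
where the middle step uses $\supp^F_{|a|}(\sigma)=|a|$ from~(\ref{eq:extend-dil}). Thus $a$ is recovered from $\eta_X(\langle a,\sigma\rangle)$, and then injectivity of the embedding $F(\en_a)$ recovers $\sigma$. Surjectivity is read off directly from the explicit list above.

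The main work is order preservation, and I would reduce it to functoriality. Given $\langle a_0,\sigma_0\rangle$ and $\langle a_1,\sigma_1\rangle$, put $b=a_0\cup a_1$ and let $\iota_i\colon a_i\hookrightarrow b$ be the inclusions. The enumerations satisfy $\en_{a_i}=\en_b\circ|\iota_i|$, so functoriality of $F$ yields
\[
\eta_X(\langle a_i,\sigma_i\rangle)=F(\en_b)\bigl(F(|\iota_i|)(\sigma_i)\bigr).
\]
Since $F(\en_b)$ is an embedding it both preserves and reflects $<_{F(X)}$, so $\eta_X(\langle a_0,\sigma_0\rangle)<_{F(X)}\eta_X(\langle a_1,\sigma_1\rangle)$ holds precisely when $F(|\iota_0|)(\sigma_0)<_{F(|b|)}F(|\iota_1|)(\sigma_1)$, which is exactly the definition of $\langle a_0,\sigma_0\rangle<_{D^F(X)}\langle a_1,\sigma_1\rangle$. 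I expect this to be the main obstacle: one must verify carefully that $\en_b\circ|\iota_i|$ really is the increasing enumeration of $a_i$ (so that the factorization is valid) and that ``embedding'' is being used in its order-reflecting sense.

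Finally, the naturality equation is a direct computation. By~(\ref{eq:extend-normal}) and Definition~\ref{def:F} we have $D^{\mu^F}_X(x)=\langle\{x\},\mu^F_1(0)\rangle=\langle\{x\},\langle 0,\bot\rangle\rangle$, whence
\[
\eta_X\bigl(D^{\mu^F}_X(x)\bigr)=F(\en_{\{x\}})(\langle 0,\bot\rangle)=\langle x,\bot\rangle=\mu^F_X(x).
\]
All constructions involve only finite data and $\Delta^0_1$ relations, so the entire argument is available in $\rca_0$.
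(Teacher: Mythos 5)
Your proposal is correct and follows essentially the same route as the paper: the same definition $\eta_X(\langle a,\sigma\rangle)=F(\en_a)(\sigma)$, order preservation via functoriality and the factorization $\en_{a_i}=\en_{a_0\cup a_1}\circ|\iota_i|$, surjectivity by exhibiting explicit preimages, and the same computation for $\eta_X\circ D^{\mu^F}_X=\mu^F_X$. Your explicit classification of the elements of $D^F(X)$ and the injectivity argument via support naturality are minor organizational variants of what the paper does (the paper gets injectivity for free from the embedding property), not a different method.
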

\begin{proof}
Recall that $D^F(X)$ consists of pairs $\langle a,\sigma\rangle$, where $a$ is a finite suborder of $X$ and $\sigma\in F(|a|)$ satisfies $\supp^F_{|a|}(\sigma)=|a|$. We set
\begin{equation*}
\eta_X(\langle a,\sigma\rangle)=F(\en_a)(\sigma),
\end{equation*}
writing $\en_a:|a|\to X$ for the increasing function with range $a$. It is straightforward to verify that $F$ is an endofunctor on the category of linear orders. Using this fact one can show that $\eta_X$ is an embedding, as in the proof of~\cite[Proposition~2.1]{freund-computable}: Given sets~$Y\subseteq Z$, we agree to write $\iota_Y^Z:Y\hookrightarrow Z$ for the inclusion. Assume that we have $\langle a,\sigma\rangle<_{D^F(X)}\langle b,\tau\rangle$, and note that this amounts to
\begin{equation*}
F(|\iota_a^{a\cup b}|)(\sigma)<_{F(|a\cup b|)} F(|\iota_b^{a\cup b}|)(\tau).
\end{equation*}
Given a finite order~$c$, we write $\en^0_c:|c|\to c$ for the increasing enumeration. For the function $\en_a:a\to X$ from above, the definition of~$|\cdot|$ yields
\begin{equation*}
\en_a=\iota_a^X\circ\en^0_a=\iota_{a\cup b}^X\circ\iota_a^{a\cup b}\circ\en^0_a=\iota_{a\cup b}^X\circ\en^0_{a\cup b}\circ|\iota_a^{a\cup b}|.
\end{equation*}
Since $F(\iota_{a\cup b}^X\circ\en^0_{a\cup b})$ is an embedding, the above inequality implies
\begin{multline*}
\eta_X(\langle a,\sigma\rangle)=F(\en_a)(\sigma)=F(\iota_{a\cup b}^X\circ\en^0_{a\cup b})\circ F(|\iota_a^{a\cup b}|)(\sigma)<_{F(X)}\\
<_{F(X)} F(\iota_{a\cup b}^X\circ\en^0_{a\cup b})\circ F(|\iota_b^{a\cup b}|)(\tau)=\eta_X(\langle b,\tau\rangle).
\end{multline*}
This confirms that $\eta_X$ is an embedding and in particular injective. It remains to show surjectivity. As a representative example, consider $\langle x,y\rangle\in F(X)$ with~$y\neq\bot$. According to Definition~\ref{def:F} we must have $y<_X x$. Hence $a:=\{x,y\}$ has two elements, and the function $\en_a:2\to X$ has values $\en_a(0)=y$ and $\en_a(1)=x$. Since $\sigma:=\langle 1,0\rangle\in F(2)$ satisfies $\supp^F_2(\sigma)=\{0,1\}=2$, we get $\langle a,\sigma\rangle\in D^F(X)$. By construction we have
\begin{equation*}
\eta_X(\langle a,\sigma\rangle)=F(\en_a)(\langle 1,0\rangle)=\langle\en_a(1),(1+\en_a)(0)\rangle=\langle x,y\rangle.
\end{equation*}
A similar argument shows that the image of $\eta_X$ contains $\bot$ and all elements of the form $\langle x,\bot\rangle$. In order to verify the remaining claim we consider $x\in X$ and write $\en_{\{x\}}:1\to X$ for the function with range $\{x\}$. In view of equation~(\ref{eq:extend-normal}) we obtain
\begin{multline*}
\eta_X\circ D^{\mu^F}_X(x)=\eta_X(\langle\{x\},\mu^F_1(0)\rangle)=F(\en_{\{x\}})(\langle 0,\bot\rangle)=\\
=\langle\en_{\{x\}}(0),(1+\en_{\{x\}})(\bot)\rangle=\langle x,\bot\rangle=\mu^F_X(x),
\end{multline*}
as required.
\end{proof}

The normal function $f$ from the introduction satisfies $f(\alpha)\leq(1+\alpha)^2$. We can now recover this result on the level of the prae-dilator $F$.

\begin{lemma}[$\rca_0$]
For each linear order $X$ we have an embedding of $D^F(X)$ into $(1+X)^2$, where the latter is equipped with the lexicographic order.
\end{lemma}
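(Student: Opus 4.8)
The plan is to exploit the isomorphism $\eta_X\colon D^F(X)\xrightarrow{\cong}F(X)$ from Lemma~\ref{lem:iso-reconstruct}, which reduces the task to constructing an embedding of $F(X)$ into $(1+X)^2$. This reduction is natural because, by Definition~\ref{def:F}, the order $F(X)$ was defined as the subset $\{\bot\}\cup\{\langle x,y\rangle\in(1+X)^2\mid y<_{1+X}x\}$, so a genuine pair $\langle x,y\rangle\in F(X)$ already \emph{is} an element of $(1+X)^2$. The construction of $F$ was engineered precisely so that the embedding into $(1+X)^2$ should be almost the identity.

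Concretely, I would define $e\colon F(X)\to(1+X)^2$ by sending the bottom element to the lexicographic minimum, $e(\bot)=\langle\bot,\bot\rangle$, and by letting $e$ act as the identity on the remaining pairs, $e(\langle x,y\rangle)=\langle x,y\rangle$. Since every genuine pair $\langle x,y\rangle\in F(X)$ satisfies $y<_{1+X}x$, its first coordinate~$x$ cannot equal $\bot$; hence $\langle x,y\rangle\neq\langle\bot,\bot\rangle$, so $\langle\bot,\bot\rangle$ does not lie in the image of the pairs and $e$ is injective. All the defining relations are $\Delta^0_1$, so $e$ exists as a set in~$\rca_0$.

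It then remains to verify that $e$ is order preserving, which I would split into two cases. For a comparison between $\bot$ and a pair~$\langle x,y\rangle$ we have $\bot<_{F(X)}\langle x,y\rangle$ on the one side and $\langle\bot,\bot\rangle<_{(1+X)^2}\langle x,y\rangle$ on the other, the latter because $x\in X$ forces $\bot<_{1+X}x$. For a comparison between two pairs $\langle x_0,y_0\rangle$ and $\langle x_1,y_1\rangle$, the order on $F(X)$ compares first coordinates via $<_X$ and breaks ties via $<_{1+X}$, while the lexicographic order compares first coordinates via $<_{1+X}$ and breaks ties the same way; these two prescriptions agree precisely because both first coordinates lie in $X$, where $<_X$ and $<_{1+X}$ coincide.

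I do not expect a genuine obstacle here, since the verification is essentially by inspection of the definitions. The only point requiring a little care is the bookkeeping around the extra element~$\bot$: one must confirm that $\langle\bot,\bot\rangle$ is not already in the range of the genuine pairs (so that $e$ stays injective) and that a comparison of a first coordinate $x\in X$ against $\bot$ behaves correctly in both orders. Composing the embedding $e$ with the isomorphism $\eta_X$ from Lemma~\ref{lem:iso-reconstruct} then yields the desired embedding of $D^F(X)$ into $(1+X)^2$.
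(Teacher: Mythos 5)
Your proposal is correct and follows essentially the same route as the paper: reduce via the isomorphism $\eta_X$ of Lemma~\ref{lem:iso-reconstruct} to embedding $F(X)$ into $(1+X)^2$, note that $F(X)\setminus\{\bot\}$ is already a suborder of $(1+X)^2$, and send $\bot$ to $\langle\bot,\bot\rangle$, which is available precisely because the condition $y<_{1+X}x$ excludes pairs with first coordinate $\bot$. The extra verifications you spell out (injectivity and the agreement of $<_X$ with $<_{1+X}$ on first coordinates) are exactly the details the paper leaves implicit.
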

\begin{proof}
In view of the previous lemma it suffices to exhibit an embedding of $F(X)$ into $(1+X)^2$. Indeed, we have defined $F(X)\backslash\{\bot\}$ as a suborder of $(1+X)^2$. In order to obtain the desired embedding it suffices to map $\bot\in F(X)$ to the minimum element $\langle\bot,\bot\rangle\in(1+X)^2$. This is possible because $\langle\bot,\bot\rangle$ does not lie in the suborder $F(X)\backslash\{\bot\}$, due to the condition $y<_{1+X} x$ in Definition~\ref{def:F}.
\end{proof}

The following result concludes the reconstruction of $f$ in second order arithmetic:

\begin{corollary}[$\rca_0$]\label{cor:F-dil}
The normal prae-dilator $F$ is a normal dilator.
\end{corollary}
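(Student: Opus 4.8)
The plan is to reduce the dilator condition to a well-foundedness fact about finite powers of well-orders that $\rca_0$ can establish. Since Lemma~\ref{lem:F-prae-dil} already exhibits $F$ as a normal prae-dilator, the only property left to verify is the one distinguishing a dilator from a prae-dilator, namely that $D^F(X)$ is well-founded whenever $X$ is a well-order. I would take this as the sole goal of the proof, the normality datum $\mu^F$ being already in place.

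First I would invoke the lemma immediately preceding the corollary, which supplies an embedding of $D^F(X)$ into the lexicographic square $(1+X)^2$. An embedding is by definition strictly increasing, so it sends any infinite descending sequence in its domain to an infinite descending sequence in its image; hence, if $(1+X)^2$ has no infinite descending sequence, then neither does $D^F(X)$. Using the equivalence of the two definitions of well-foundedness recorded in the discussion following equation~(\ref{eq:extend-dil}), this reduces the corollary to the claim that $(1+X)^2$ is well-founded for every well-order $X$.

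The remaining work is carried out in $\rca_0$. I would first note that $1+X$ is a well-order whenever $X$ is: an infinite descending sequence in $1+X$ cannot be eventually equal to the adjoined minimum $\bot$, so after deleting at most an initial segment it becomes a descending sequence in $X$. I would then appeal to the fact that $\rca_0$ proves the lexicographic product of two well-orders to be a well-order, equivalently that ordinal multiplication preserves well-foundedness. Informally the argument runs through the more significant (first) coordinate: along any descending sequence in $(1+X)^2$ the first coordinates form a non-increasing sequence in the well-order $1+X$, so they must eventually stabilise, after which the second coordinates descend strictly in $1+X$, a contradiction.

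The delicate point is this last step, since the phrase ``eventually stabilise'' conceals a choice that is not obviously available in $\rca_0$: extracting the stabilisation index, or a strictly descending subsequence of first coordinates, is not a $\Delta^0_1$ operation. I expect this to be the main obstacle, and I would handle it either by citing the standard reverse-mathematics fact that $\rca_0$ proves well-foundedness of products of well-orders, or by replacing the descending-sequence heuristic with a rigorous nested transfinite induction along $X$, which is legitimate because the relevant induction formula (carrying an extra universal number quantifier over the second coordinate) remains arithmetical. It is precisely here that the quadratic bound emphasised in the introduction is essential: a lexicographic square stays within the reach of $\rca_0$, whereas the exponential $2^X$ would, by Theorem~\ref{thm:girard-hirst}, demand arithmetical comprehension.
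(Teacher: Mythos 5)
Your proposal follows the paper's proof in its entire structure: invoke Lemma~\ref{lem:F-prae-dil} so that only preservation of well-foundedness remains, use the embedding of $D^F(X)$ into $(1+X)^2$ from the preceding lemma, and then establish in $\rca_0$ that a lexicographic product of well-orders is well-founded. The paper does exactly this, and its proof of the product fact is precisely the descending-sequence argument you sketch: given a strictly decreasing sequence $(\langle x_n,y_n\rangle)_{n\in\mathbb N}$, the first coordinates are non-increasing, they stabilise at some index $N$, and then $(y_n)_{n\geq N}$ descends in the second factor.

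Where your proposal goes astray is in the diagnosis of the ``delicate point'' and in one of your two proposed remedies. First, no extraction of the stabilisation index is needed, because the argument is a proof by contradiction in a classical theory: one splits into the case that some $N$ with $x_n=x_N$ for all $n\geq N$ \emph{exists} (mere existential instantiation, no computability required), and the case that for every $N$ there is $n>N$ with $x_n<_Xx_N$. Effectivity is only needed in the second case, where one builds a strictly decreasing sequence in $X$ by primitive recursion relative to the given sequence; this is unproblematic in $\rca_0$ because each unbounded search provably terminates under the case hypothesis, so the subsequence exists by $\Delta^0_1$ comprehension together with $\Sigma^0_1$ induction. This is exactly what the paper's parenthetical remark ``otherwise a strictly decreasing sequence in $X$ could be constructed by recursion'' refers to. Second, your fallback of a ``nested transfinite induction along $X$'' with an arithmetical induction formula is not legitimate in $\rca_0$: by Hirst's result cited in this very paper, arithmetical transfinite induction is equivalent to arithmetical comprehension over $\rca_0$, so that route would assume a principle strictly stronger than the base theory and render the reversal circular. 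Your other fallback, citing the standard fact that $\rca_0$ proves products of well-orders to be well-orders, is sound; the paper simply proves that fact directly, since the argument above is short.
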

\begin{proof}
In view of Lemma~\ref{lem:F-prae-dil} it remains to show that $D^F(X)$ is well-founded for any well-order~$X$. By the previous lemma this reduces to the claim that $(1+X)^2$ is well-founded. More generally, the usual proof that any product $X\times Y$ of well-orders is well-founded goes through in $\rca_0$: Assume that there is a strictly decreasing sequence $(\langle x_n,y_n\rangle)_{n\in\mathbb N}$ in $X\times Y$. Then the sequence $(x_n)_{n\in\mathbb N}$ is non-increasing. Since $X$ is well-founded, there is an $N\in\mathbb N$ such that $x_n=x_N$ holds for all $n\geq N$ (otherwise a strictly decreasing sequence in $X$ could be constructed by recursion). Then $(y_n)_{n\geq N}$ is a strictly decreasing sequence in $Y$, which contradicts the assumption that $Y$ is well-founded.
\end{proof}

\section{From derivative to arithmetical comprehension}

In the present section we recall how derivatives of normal functions are defined in the context of second order arithmetic. We then show how the inequality $2^\alpha\leq f'(\alpha)$ from the introduction can be recovered in $\rca_0$. Finally, we conclude that the base theory in a result of Rathjen and the present author can be lowered from $\aca_0$ to $\rca_0$.

If $g'$ is the derivative of a normal function $g$, then we have $g\circ g'=g'$. To formulate this condition in second order arithmetic, we need to define the composition $T\circ S$ of normal prae-dilators. This is not entirely straightforward: In view of Definition~\ref{def:prae-dil} the orders $S(n)$ may be infinite, while $T$ is only defined on finite orders represented by natural numbers. In order to overcome this obstacle we use equation~(\ref{eq:extend-dil}) to extend $T$ beyond the category of natural numbers, and set
\begin{equation*}
(T\circ S)(n)=D^T(S(n)).
\end{equation*}
One can equip $T\circ S$ with the structure of a prae-dilator, as shown in~\cite[Section~2]{freund-rathjen_derivatives}. According to~\cite[Proposition~2.14]{freund-rathjen_derivatives} there is a family of isomorphisms
\begin{equation*}
\zeta^{T,S}_X:D^T\circ D^S(X)\xrightarrow{\cong} D^{T\circ S}(X).
\end{equation*}
If $S$ and $T$ are dilators, then equation (\ref{eq:induced-function}) yields
\begin{equation*}
f_{T\circ S}(\alpha)=\otp(D^{T\circ S}(\alpha))=\otp(D^T\circ D^S(\alpha))=\otp(D^T(f_S(\alpha)))=f_T\circ f_S(\alpha),
\end{equation*}
where the third equality relies on $D^S(\alpha)\cong\otp(D^S(\alpha))=f_S(\alpha)$ and the fact that~$D^T$ is functorial. Hence the given composition of dilators represents the usual composition of functions on the ordinals. If $T=(T,\mu^T)$ and $S=(S,\mu^S)$ are normal prae-dilators, then we can invoke equation~(\ref{eq:extend-normal}) to define $\mu^{T\circ S}_n:n\to(T\circ S)(n)$ by
\begin{equation*}
\mu^{T\circ S}_n=D^{\mu^T}_{S(n)}\circ\mu^S_n.
\end{equation*}
In~\cite[Lemma~2.16]{freund-rathjen_derivatives} it has been verified that this turns $T\circ S$ into a normal prae-dilator, and that we have
\begin{equation}\label{eq:mu-compose}
D^{\mu^{T\circ S}}_X=\zeta^{T,S}_X\circ D^{\mu^T}_{D^S(X)}\circ D^{\mu^S}_X.
\end{equation}
We can now recall the following notion, which has been introduced in~\cite{freund-rathjen_derivatives}:

\begin{definition}[$\rca_0$]
Let $T$ be a normal prae-dilator. An upper derivative of~$T$ consists of a normal prae-dilator $S$ and a natural transformation $\xi:T\circ S\Rightarrow S$ that satisfies $\xi\circ\mu^{T\circ S}=\mu^S$.
\end{definition}

According to~\cite[Lemma~2.19]{freund-rathjen_derivatives}, the natural transformation $\xi$ can be extended into a family of order embeddings $D^\xi_X:D^{T\circ S}(X)\to D^S(X)$ with
\begin{equation}\label{eq:extend-upper-deriv}
D^\xi_X\circ D^{\mu^{T\circ S}}_X=D^{\mu^S}_X.
\end{equation}
If $S$ is a dilator, then the embedding $D^\xi_\alpha$ witnesses
\begin{equation*}
f_T\circ f_S(\alpha)=\otp(D^{T\circ S}(\alpha))\leq\otp(D^S(\alpha))=f_S(\alpha),
\end{equation*} 
for any ordinal $\alpha$. The converse inequality is automatic when $f_T$ is a normal function. Hence $f_S$ does indeed enumerate fixed points of $f_T$. It is possible that some fixed points are omitted. In this case $f_S$ grows faster than the derivative of $f_T$, which justifies the term ``upper derivative". To characterize the actual derivative on the level of normal dilators one can consider initial objects in the category of upper derivatives, as shown in~\cite{freund-rathjen_derivatives}.

We can now state the main technical result of this paper. As explained in the introduction, the order $2^X$ consists of finite descending sequences with entries in~$X$.

\begin{theorem}[$\rca_0$]
Assume that $G$ and $\xi:F\circ G\Rightarrow G$ form an upper derivative of the normal dilator $F$ from Definition~\ref{def:F}. Then there is an order embedding of $2^X$ into~$D^G(X)$, for each linear order~$X$.
\end{theorem}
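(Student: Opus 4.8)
The plan is to mimic, on the level of dilators, the ordinal recursion from the introduction. The one structural ingredient needed is a single order embedding
$$\theta_X\colon F(D^G(X))\to D^G(X)$$
that realises the fixed-point inequality $f(g(\alpha))\leq g(\alpha)$ together with a compatibility condition for the normalising maps. I would obtain $\theta_X$ by composing data already available: the isomorphism $\eta_{D^G(X)}\colon D^F(D^G(X))\xrightarrow{\cong}F(D^G(X))$ of Lemma~\ref{lem:iso-reconstruct}, the isomorphism $\zeta^{F,G}_X\colon D^F\circ D^G(X)\xrightarrow{\cong}D^{F\circ G}(X)$, and the embedding $D^\xi_X\colon D^{F\circ G}(X)\to D^G(X)$ from equation~(\ref{eq:extend-upper-deriv}). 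Thus I set $\theta_X:=D^\xi_X\circ\zeta^{F,G}_X\circ\eta_{D^G(X)}^{-1}$, an order embedding as a composite of two isomorphisms and an embedding. Chasing equations~(\ref{eq:mu-compose}) and~(\ref{eq:extend-upper-deriv}) together with the identity $\eta_X\circ D^{\mu^F}_X=\mu^F_X$, I would verify the key compatibility
$$\theta_X(\langle D^{\mu^G}_X(x),\bot\rangle)=D^{\mu^G}_X(x)\qquad\text{for all }x\in X,$$
recalling that $\mu^F_Y(y)=\langle y,\bot\rangle$ by Definition~\ref{def:F}. Well-foundedness of $X$ is never used here, which matches the fact that $G$ is only assumed to be a normal prae-dilator.

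With $\theta_X$ in hand I would define the desired embedding $e_X\colon 2^X\to D^G(X)$ by recursion on the length of the descending sequence $s$, following the clauses
$$e_X(\langle\rangle)=\theta_X(\bot),\qquad e_X(\langle x_1\rangle{}^\frown s')=\theta_X(\langle D^{\mu^G}_X(x_1),w\rangle),$$
where $w=\bot$ if $s'=\langle\rangle$ and $w=e_X(s')$ otherwise. Geometrically, the leading entry $x_1$ selects the block of $F(D^G(X))$ indexed by $D^{\mu^G}_X(x_1)$ (the element sitting at ordinal position $g(\beta)$, with $\beta$ the rank of $x_1$), while the tail $s'$ is placed inside that block; this reproduces the assignment $2^\beta+\gamma\mapsto f(g(\beta))+1+\gamma$ underlying the computation in the introduction. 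Note that the clause for $s'=\langle\rangle$ gives $e_X(\langle x_1\rangle)=D^{\mu^G}_X(x_1)$ by the compatibility above. The recursion is primitive recursive in the sequence length and hence available in $\rca_0$.

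For the clauses to be meaningful one must know that $\langle D^{\mu^G}_X(x_1),w\rangle$ is a genuine element of $F(D^G(X))$, i.e.\ that $w<_{D^G(X)}D^{\mu^G}_X(x_1)$ whenever $w\neq\bot$. I would establish this, simultaneously with well-definedness, by induction on $\len(s)$, proving the auxiliary statement: if every entry of $s$ is $<_X x$, then $e_X(s)<_{D^G(X)}D^{\mu^G}_X(x)$. The base case uses the compatibility identity and the minimality of $\bot$ to obtain $e_X(\langle\rangle)=\theta_X(\bot)<_{D^G(X)}\theta_X(\langle D^{\mu^G}_X(x),\bot\rangle)=D^{\mu^G}_X(x)$; the step compares leading entries $D^{\mu^G}_X(x_1)<_{D^G(X)}D^{\mu^G}_X(x)$ inside $F(D^G(X))$ and again applies the embedding $\theta_X$. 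Applied to the tail $s'$ (whose entries are all $<_X x_1$) this yields exactly the bound $e_X(s')<_{D^G(X)}D^{\mu^G}_X(x_1)$ required for the recursion.

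Finally I would show that $e_X$ is an order embedding by proving $s<_{2^X}t\Rightarrow e_X(s)<_{D^G(X)}e_X(t)$ by induction on $\len(s)+\len(t)$. Since $\theta_X$ is an embedding, it suffices to compare the associated elements $\langle D^{\mu^G}_X(x_1),w_s\rangle$ and $\langle D^{\mu^G}_X(y_1),w_t\rangle$ in the lexicographic order of Definition~\ref{def:F}. When the leading entries differ the first coordinate decides (as $D^{\mu^G}_X$ is an embedding and the lexicographic order of $2^X$ compares leading entries first); when $x_1=y_1$ the comparison passes to the second coordinate and reduces, via the induction hypothesis on the tails, to the relation between $w_s$ and $w_t$ (the subcase $s'=\langle\rangle$ being covered by $\bot$ being the minimum of $1+D^G(X)$), while the empty sequence is handled directly through $e_X(\langle\rangle)=\theta_X(\bot)$. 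The main obstacle I anticipate is precisely this bookkeeping: aligning the lexicographic order on descending sequences (including the empty and prefix cases) with the block structure of $F(D^G(X))$, and organising the inductions so that well-definedness, the auxiliary order bound, and order preservation all go through by induction on sequence length alone --- thereby confirming the remark from the introduction that transfinite induction can be avoided.
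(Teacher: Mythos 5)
Your proposal is correct and follows essentially the same route as the paper: your $\theta_X$ is exactly the paper's map $\xi^F_X=D^\xi_X\circ\zeta^{F,G}_X\circ\eta_{D^G(X)}^{-1}$, your auxiliary bound is the paper's simultaneous induction hypothesis, and the recursion and order-preservation arguments coincide. The only (harmless) deviation is that for a singleton tail you insert $\bot$ as the second component, so that $e_X(\langle x_1\rangle)=D^{\mu^G}_X(x_1)$, whereas the paper uniformly inserts $J(\langle\rangle)=\xi^F_X(\bot)$; both variants go through.
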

\begin{proof}
As preparation, we observe the following: By Lemma~\ref{lem:iso-reconstruct} and the results that we have recalled in the the first part of the present section, we get an embedding
\begin{equation*}
\xi^F_X:=D^\xi_X\circ\zeta^{F,G}_X\circ\eta_{D^G(X)}^{-1}:F(D^G(X))\to D^G(X).
\end{equation*}
According to Definition~\ref{def:normal-dil}, the normal prae-dilator $G$ comes with a natural transformation $\mu^G$. The latter extends into an embedding $D^{\mu^G}_X:X\to D^G(X)$, by equation~(\ref{eq:extend-normal}). The values of the desired embedding
\begin{equation*}
J:2^X\to D^G(X)
\end{equation*}
will be defined by recursion along sequences in $2^X$. To ensure that the recursion goes through we will simultaneously verify that we have
\begin{equation}\label{eq:embedding-sim-ind}
J(\langle x_1,\dots,x_n\rangle)<_{D^G(X)} D^{\mu^G}_X(x)\qquad\text{if we have $x_1<_X x$ or $n=0$}.
\end{equation}
Officially, the recursive construction and the inductive verification should be untangled. To show that this is possible, we point out that it will always be decidable whether the prerequisites of our recursive clauses are satisfied. Whenever they fail, we can thus assign $\xi^F_X(\bot)\in D^G(X)$ as a default value. Once the recursion is completed, the inductive verification shows that the default value is never required. Let us also point out that the induction statement has complexity $\Pi^0_1$ (note that (\ref{eq:embedding-sim-ind}) involves a universal quantification over~$x$). Since $\Sigma^0_1$-induction and $\Pi^0_1$-induction are equivalent (see e.g.~\cite[Corollary~II.3.10]{simpson09}), this shows that the induction can be carried out in~$\rca_0$. Let us now specify the details: For the base of the recursion we use the minimum element $\bot$ of $F(D^G(X))$ and set
\begin{equation*}
J(\langle\rangle)=\xi^F_X(\bot).
\end{equation*}
To verify condition~(\ref{eq:embedding-sim-ind}) we observe that equations~(\ref{eq:extend-upper-deriv}) and (\ref{eq:mu-compose}) and Lemma~\ref{lem:iso-reconstruct} yield
\begin{multline*}
D^{\mu^G}_X(x)=D^\xi_X\circ D^{\mu^{F\circ G}}_X(x)=D^\xi_X\circ\zeta^{F,G}_X\circ D^{\mu^F}_{D^G(X)}\circ D^{\mu^G}_X(x)=\\
=D^\xi_X\circ\zeta^{F,G}_X\circ\eta_{D^G(X)}^{-1}\circ\mu^F_{D^G(X)}\circ D^{\mu^G}_X(x)=\xi^F_X\circ\mu^F_{D^G(X)}\circ D^{\mu^G}_X(x)=\xi^F_X(\langle D^{\mu^G}_X(x),\bot\rangle).
\end{multline*}
In view of $\bot<_{F(D^G(X))}\langle D^{\mu^G}_X(x),\bot\rangle$ we get $J(\langle\rangle)<_{D^G(X)}D^{\mu^G}_X(x)$ for any $x\in X$, as required by condition~(\ref{eq:embedding-sim-ind}). In the recursion step we put
\begin{equation*}
J(\langle x_0,\dots,x_n\rangle)=\xi^F_X(\langle D^{\mu^G}_X(x_0),J(\langle x_1,\dots,x_n\rangle)\rangle).
\end{equation*}
To see that the argument $\langle D^{\mu^G}_X(x_0),J(\langle x_1,\dots,x_n\rangle)\rangle$ does indeed lie in the domain $F(D^G(X))$ of $\xi^F_X$, we must establish the condition $J(\langle x_1,\dots,x_n\rangle)<_{D^G(X)}D^{\mu^G}_X(x_0)$ from Definition~\ref{def:F}. By the definition of the order $2^X$ we have $x_1<_Xx_0$ or $n=0$. Hence the required inequality holds by condition~(\ref{eq:embedding-sim-ind}). Furthermore, condition~(\ref{eq:embedding-sim-ind}) remains valid in the recursion step: For $x_0<_X x$ we have $D^{\mu^G}_X(x_0)<_{D^G(X)}D^{\mu^G}_X(x)$. Together with the definition of $J(\langle x_0,\dots,x_n\rangle)$ and the equality $D^{\mu^G}_X(x)=\xi^F_X(\langle D^{\mu^G}_X(x),\bot\rangle)$ from above, this does indeed imply the condition $J(\langle x_0,\dots,x_n\rangle)<_{D^G(X)}D^{\mu^G}_X(x)$. It remains to show that $J$ is an order embedding. We establish
\begin{equation*}
\sigma<_{2^X}\tau\quad\Rightarrow\quad J(\sigma)<_{D^G(X)}J(\tau)
\end{equation*}
by joint induction on $\sigma$ and $\tau$ (or by induction on the length of $\tau$, which leads to an induction statement of complexity~$\Pi^0_1$). Let us first assume that we have
\begin{equation*}
\sigma=\langle\rangle<_{2^X}\langle y_0,\dots,y_m\rangle=\tau
\end{equation*}
with $\tau\neq\langle\rangle$. Since $\bot\in F(D^G(X))$ is minimal we do indeed get
\begin{equation*}
J(\sigma)=\xi^F_X(\bot)<_{D^G(X)}\xi^F_X(\langle D^{\mu^G}_X(y_0),J(\langle y_1,\dots,y_m\rangle)\rangle)=J(\tau).
\end{equation*}
Now consider an inequality
\begin{equation*}
\sigma=\langle x_0,\dots,x_n\rangle<_{2^X}\langle y_0,\dots,y_m\rangle=\tau.
\end{equation*}
We must either have $x_0<_X y_0$, or $x_0=y_0$ and $\langle x_1,\dots,x_n\rangle<_{2^X}\langle y_1,\dots,y_m\rangle$. If the latter holds, then we get $J(\langle x_1,\dots,x_n\rangle)<_{D^G(X)}J(\langle y_1,\dots,y_m\rangle)$ by the induction hypothesis. In either case we obtain
\begin{equation*}
\langle D^{\mu^G}_X(x_0),J(\langle x_1,\dots,x_n\rangle)\rangle<_{F(D^G(X))}\langle D^{\mu^G}_X(y_0),J(\langle y_1,\dots,y_m\rangle)\rangle.
\end{equation*}
By applying $\xi^F_X$ to both sides we get $J(\sigma)<_{D^G(X)}J(\tau)$.
\end{proof}

Recall that a (normal) prae-dilator $S$ is a dilator if and only if the order $D^S(X)$ is well-founded for any well-order~$X$. We can draw the following conclusion.

\begin{corollary}[$\rca_0$]\label{cor:deriv-to-aca}
Assume that any normal dilator $T$ has an upper derivative $(S,\xi)$ such that $S$ is a dilator. Then arithmetical comprehension holds.
\end{corollary}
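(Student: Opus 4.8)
The plan is simply to assemble the components that have already been put in place. First I would apply the hypothesis to the particular normal dilator $F$ from Definition~\ref{def:F}, which is legitimately a normal dilator by Corollary~\ref{cor:F-dil}. This yields an upper derivative $(G,\xi)$ of $F$, consisting of a normal prae-dilator $G$ together with a natural transformation $\xi:F\circ G\Rightarrow G$ satisfying $\xi\circ\mu^{F\circ G}=\mu^G$, such that $G$ is in fact a dilator.

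Next I would invoke the preceding theorem, whose hypothesis is met by $G$ and $\xi$ exactly as just obtained. It supplies, for each linear order $X$, an order embedding of $2^X$ into $D^G(X)$.

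Now fix an arbitrary well-order $X$. Since $G$ is a dilator, the order $D^G(X)$ is well-founded, by the characterization recalled immediately before the corollary. I would then observe that well-foundedness transfers backwards along order embeddings: any strictly descending sequence in $2^X$ would be carried by the embedding to a strictly descending sequence in $D^G(X)$, contradicting the well-foundedness of the latter. This transfer is available in $\rca_0$, as it only requires defining one sequence from another by primitive recursion. Consequently $2^X$ is well-founded for every well-order $X$. This is precisely the second clause of Theorem~\ref{thm:girard-hirst} (with base $2$), so that theorem delivers arithmetical comprehension, i.e.\ the principal axiom of $\aca_0$, over $\rca_0$.

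The argument presents no genuine obstacle, since all the substantive work has been discharged in the earlier lemmas, in Corollary~\ref{cor:F-dil}, and in the preceding theorem; the corollary is essentially a bookkeeping step that chains these together. The only points meriting a line of care are confirming that the backward transfer of well-foundedness along an embedding is formalizable in $\rca_0$ (it is, being a $\Sigma^0_1$ recursion), and checking that the clause so obtained matches the hypothesis of the Girard--Hirst equivalence verbatim, which it does.
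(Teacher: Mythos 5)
Your proposal is correct and takes essentially the same approach as the paper's own proof: apply the hypothesis to the normal dilator $F$ of Definition~\ref{def:F} (using Lemma~\ref{lem:F-prae-dil} and Corollary~\ref{cor:F-dil}), embed $2^X$ into the well-founded order $D^G(X)$ via the preceding theorem, and conclude by Theorem~\ref{thm:girard-hirst}. The only difference is that you spell out why well-foundedness pulls back along an order embedding in $\rca_0$, a point the paper leaves implicit with the word ``witnesses''.
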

\begin{proof}
In view of Theorem~\ref{thm:girard-hirst} it suffices to show that $2^X$ is well-founded for any given well-order~$X$. Construct $F$ as in Definition~\ref{def:F}. From Lemma~\ref{lem:F-prae-dil} and Corollary~\ref{cor:F-dil} we know that $F$ is a normal dilator. Hence the assumption of the present corollary yields an upper derivative $\xi:F\circ G\Rightarrow G$ such that $D^G(X)$ is well-founded. The previous theorem provides an order embedding of $2^X$ into $D^G(X)$, which witnesses that $2^X$ is well-founded as well.
\end{proof}

According to~\cite[Definition~2.26]{freund-rathjen_derivatives}, a derivative of a normal prae-dilator is an upper derivative that is initial in a suitable sense. In~\cite[Section~4]{freund-rathjen_derivatives} it has been shown how to construct a derivative $(\partial T,\xi^T)$ of a given normal prae-dilator $T$. The transformation of $T$ into $\partial T$ and $\xi^T$ can be implemented in $\rca_0$ (in particular it is computable). Hence $\rca_0$ proves that (upper) derivatives exist. What $\rca_0$ cannot show is that $X\mapsto D^{\partial T}(X)$ preserves well-foundedness when $X\mapsto D^T(X)$ does. Indeed, Rathjen and the present author have shown that the latter is equivalent to $\Pi^1_1$-bar induction (which asserts that $\Pi^1_1$-induction is available along any well-order). As explained in the introduction, we can now lower the base theory over which this equivalence holds (Theorem~5.9 of~\cite{freund-rathjen_derivatives} proves it over $\aca_0$).

\begin{corollary}[$\rca_0$]\label{cor:main-equiv}
The following are equivalent:
\begin{enumerate}
\item If $T$ is a normal dilator, then so is $\partial T$.
\item Any normal dilator~$T$ has an upper derivative $(S,\xi)$ such that $S$ is a dilator.
\item The principle of $\Pi^1_1$-bar induction holds.
\end{enumerate}
\end{corollary}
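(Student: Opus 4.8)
The plan is to prove the three-way equivalence by establishing a cycle of implications, leaning on the work already done in this paper and in \cite{freund-rathjen_derivatives}. The most natural route is $(3)\Rightarrow(1)\Rightarrow(2)\Rightarrow(3)$, where the last arrow is the genuinely new contribution and the other two are essentially bookkeeping over results we may cite.

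\textbf{From $(3)$ to $(1)$.} First I would recall that the passage $T\mapsto\partial T$ together with $\xi^T$ is a purely $\rca_0$-level (indeed computable) construction, as noted in the paragraph preceding the corollary. What $(1)$ adds is that $\partial T$ \emph{preserves well-foundedness} whenever $T$ does. This is exactly the content of one direction of \cite[Theorem~5.9]{freund-rathjen_derivatives}, namely the implication that $\Pi^1_1$-bar induction yields that every normal function has a derivative. The point to stress is that, as the excerpt observes, the proof of $(2)\Rightarrow(1)$ in that theorem is already carried out in $\rca_0$ (only the reverse direction there used arithmetical comprehension via Kleene normal form). So I would simply quote that the argument deriving ``$\partial T$ is a dilator'' from $\Pi^1_1$-bar induction goes through over the weaker base theory, citing the relevant lemma.

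\textbf{From $(1)$ to $(2)$.} This is immediate: if $T$ is a normal dilator then by $(1)$ the prae-dilator $\partial T$ is again a dilator, and $(\partial T,\xi^T)$ is by \cite[Definition~2.26]{freund-rathjen_derivatives} an (upper) derivative of $T$. Hence $(\partial T,\xi^T)$ is an upper derivative whose first component is a dilator, which is precisely the existential statement $(2)$. No well-foundedness argument is needed here beyond what $(1)$ supplies.

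\textbf{From $(2)$ to $(3)$, the main obstacle.} This is the new implication and the heart of the matter. By Corollary~\ref{cor:deriv-to-aca}, statement $(2)$ already implies arithmetical comprehension over $\rca_0$. So once I have $(2)$ I may freely work in $\aca_0$. Now I invoke \cite[Theorem~5.9]{freund-rathjen_derivatives} in earnest: over $\aca_0$ it tells me that ``every normal dilator has a derivative that is a dilator'' is equivalent to $\Pi^1_1$-bar induction, so from $(2)$ (which gives the antecedent, using initiality of $\partial T$ among upper derivatives to reduce an arbitrary dilator-valued upper derivative to $\partial T$) I obtain $(3)$. The subtlety I expect to be the real work is the reduction step: statement $(2)$ only provides \emph{some} upper derivative $(S,\xi)$ that is a dilator, whereas $(1)$ and the cited theorem are phrased in terms of the canonical $\partial T$. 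I would bridge this using the initiality of $(\partial T,\xi^T)$, which yields a natural transformation $\partial T\Rightarrow S$ and hence an order embedding $D^{\partial T}(X)\hookrightarrow D^S(X)$ for every $X$; since $D^S(X)$ is well-founded whenever $X$ is, this forces $D^{\partial T}(X)$ to be well-founded as well, so $\partial T$ is a dilator and we are back to $(1)$-style data. In short, the cycle closes once one checks that an arbitrary dilator-valued upper derivative dominates the initial one, and this domination is exactly the embedding furnished by initiality together with the well-foundedness transfer argument used already in Corollary~\ref{cor:F-dil}.
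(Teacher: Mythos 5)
Your proposal is correct and follows essentially the same route as the paper: the identical cycle of implications, with $(3)\Rightarrow(1)$ and $(1)\Rightarrow(2)$ quoted from the $\rca_0$-level results of \cite{freund-rathjen_derivatives} (Theorem~5.8 and Proposition~4.11 there, respectively) and $(2)\Rightarrow(3)$ obtained by first extracting arithmetical comprehension via Corollary~\ref{cor:deriv-to-aca} and then running the original $\aca_0$ argument of Theorem~5.9. The only cosmetic differences are that the fact that $(\partial T,\xi^T)$ is an upper derivative is the content of Proposition~4.11 rather than of Definition~2.26, and that your extra initiality detour in $(2)\Rightarrow(3)$ is a harmless elaboration the paper leaves to the cited proof.
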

\begin{proof}
To see that (1) implies (2) it suffices to know that $\partial T$ and $\xi^T$ form an upper derivative of $T$. This holds by~\cite[Proposition~4.11]{freund-rathjen_derivatives}, which was proved in~$\rca_0$. The implication from~(2) to (3) holds over $\aca_0$, by the original proof of~\cite[Theorem~5.9]{freund-rathjen_derivatives}. Now Corollary~\ref{cor:deriv-to-aca} of the present paper tells us that (2) implies arithmetical comprehension, which means that all ingredients of the proof become available over~$\rca_0$. The implication from (3) to (1) holds by~\cite[Theorem~5.8]{freund-rathjen_derivatives}, which was established in $\rca_0$.
\end{proof}

Combining Corollaries~\ref{cor:main-equiv} and~\ref{cor:deriv-to-aca} yields a somewhat indirect proof that $\Pi^1_1$-bar induction implies arithmetical comprehension. In fact, the latter is equivalent to the weaker principle of arithmetical transfinite induction, as shown by J.~Hirst~\cite{hirst99}.

\section{Ordinal exponentiation as a derivative}

In the present section we show that the derivative of the normal function $f$ from the introduction is given by $f'(\alpha)=\omega^{\omega^\alpha}$. We also specify a normal function $g$ with $g(\alpha)\leq 1+\alpha\cdot 2$ and $g'(\alpha)=\omega^{1+\alpha}$. The relevance of this result has been discussed at the end of the introduction. In contrast to the previous sections, we do not aim to formalize this section in a weak base theory.

The normal function~$f$ was defined by $f(\alpha)=1+\sum_{\gamma<\alpha}(1+\gamma)$, or more formally by the recursive clauses
\begin{align*}
f(0)&=1,\\
f(\alpha+1)&=f(\alpha)+1+\alpha,\\
f(\lambda)&=\textstyle\sup_{\alpha<\lambda}f(\alpha)\qquad\text{for $\lambda$ limit}.
\end{align*}
Recall that $\alpha>0$ is multiplicatively (resp.~additively) principal if $\beta,\gamma<\alpha$ implies $\beta\cdot\gamma<\alpha$ (resp.~$\beta+\gamma<\alpha$). The following determines the derivative of $f$.

\begin{lemma}
 We have $f(\alpha)=\alpha$ if and only if $\alpha$ is a multiplicatively principal limit ordinal.
\end{lemma}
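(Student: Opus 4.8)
The plan is to characterise the fixed points of $f$ by pairing the upper bound $f(\alpha)\le 1+\alpha^2$ from the introduction with a complementary lower bound, and to reduce the computation to closure properties of $\alpha$ under multiplication. I would first record two preliminaries. Since $f$ is strictly increasing we have $f(\alpha)\ge\alpha$, so the fixed points are exactly the ordinals at which $f$ fails to increase strictly; and a direct computation rules out non-limits, since $f(0)=1$ and $f(\beta+1)=f(\beta)+1+\beta\ge\beta+1+\beta>\beta+1$. Hence every fixed point is a limit, and it remains to prove, for limit $\alpha$, that $f(\alpha)=\alpha$ holds if and only if $\alpha$ is multiplicatively principal.

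For the ``if'' direction I would reuse the bound from the introduction. If $\alpha$ is a multiplicatively principal limit and $\gamma<\alpha$, then $\gamma\cdot\gamma<\alpha$, and since $\alpha$ is a limit also $1+\gamma^2<\alpha$; together with $f(\gamma)\le 1+\gamma^2$ this gives $f(\gamma)<\alpha$. Taking the supremum over $\gamma<\alpha$ and using continuity of $f$ yields $f(\alpha)\le\alpha$, which with $f(\alpha)\ge\alpha$ gives equality.

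For the ``only if'' direction I would argue contrapositively via a lower bound. The key estimate is that for any limit $\alpha$ and any $\beta<\alpha$ one has $f(\alpha)\ge\beta\cdot\theta$, where $\theta$ is the order type of $[\beta,\alpha)$ (so $\beta+\theta=\alpha$): indeed the tail $\sum_{\beta\le\delta<\alpha}(1+\delta)$ is a sum of $\theta$ many summands, each of which is $\ge\beta$. Now suppose $\alpha$ is a limit that is not multiplicatively principal, and split into two cases. If $\alpha$ is not even additively principal, let $\lambda=\omega^{\rho_1}$ be the leading term of its Cantor normal form and take $\beta=\lambda$; then the remainder $\theta$ satisfies $\lambda\cdot\theta>\alpha$, since either $\theta\ge\lambda$ and $\lambda\cdot\theta\ge\lambda^2=\omega^{\rho_1\cdot 2}\ge\omega^{\rho_1+1}>\alpha$, or $\theta$ is an infinite limit and $\lambda\cdot\theta\ge\lambda\cdot\omega=\omega^{\rho_1+1}>\alpha$. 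If instead $\alpha=\omega^\rho$ is additively principal but not multiplicatively principal, then $\rho$ is not additively principal (here I use that additively principal $\rho$ forces $\omega^\rho$ to be multiplicatively principal), so there is $\sigma<\rho$ with $\sigma+\rho>\rho$; taking $\beta=\omega^\sigma$ we have $\theta=\alpha$ and $\beta\cdot\alpha=\omega^{\sigma+\rho}>\omega^\rho=\alpha$. In every case $f(\alpha)>\alpha$, so $\alpha$ is not a fixed point.

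The main obstacle I anticipate is the ``only if'' direction: one must ensure the lower bound $\beta\cdot\theta$ strictly exceeds $\alpha$ rather than merely reaching it. The borderline instances, such as $\alpha=\omega^2$ with $\omega\cdot\omega=\alpha$, are exactly those where the extra room provided by $\theta=\alpha$ for additively principal $\alpha$ is essential, so the choice of $\beta$ must be made carefully. Beyond this, the proof rests on correctly linking the two principality notions through the identity that $\omega^\rho$ is multiplicatively principal if and only if $\rho$ is additively principal, and on routine ordinal-arithmetic bookkeeping ($1+\gamma$ versus $\gamma+1$, the order type of $[\beta,\alpha)$, and the absorption of lower-order terms), which needs attention but poses no conceptual difficulty.
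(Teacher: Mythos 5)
Your proof is correct, and the ``if'' direction coincides with the paper's (both use $f(\gamma)\leq 1+\gamma^2$ and continuity). For the ``only if'' direction you take a genuinely different route. The paper argues directly from $f(\alpha)=\alpha$: it first reads off from the recursion that $\alpha$ is a limit, then that $\alpha$ is additively principal (via $\beta+\gamma\leq f(\delta+1)<f(\alpha)$ with $\delta=\max\{\beta,\gamma\}$), and finally that $\alpha$ is multiplicatively principal by proving $\beta\cdot\gamma\leq f(\beta+\gamma)$ by induction on $\gamma$ and using additive principality to keep $\beta+\gamma$ below $\alpha$. Your contrapositive argument rests on what is essentially the same estimate --- your bound $f(\alpha)\geq\beta\cdot\theta$ with $\beta+\theta=\alpha$ is the instance $\gamma=\theta$ of the paper's $\beta\cdot\gamma\leq f(\beta+\gamma)$, read off from the sum representation rather than proved by transfinite induction --- but you deploy it differently: instead of deriving additive principality first, you run a Cantor-normal-form case analysis to exhibit a specific $\beta<\alpha$ with $\beta\cdot\theta>\alpha$ whenever $\alpha$ is a non-multiplicatively-principal limit. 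Your case split is exhaustive and the individual estimates ($\lambda\cdot\theta\geq\omega^{\rho_1+1}>\alpha$ when $\alpha$ is not additively principal; $\omega^\sigma\cdot\omega^\rho=\omega^{\sigma+\rho}>\alpha$ with $\sigma$ the leading term of a non-additively-principal $\rho$) check out, including the correct observation that additive principality of $\alpha$ forces $\theta=\alpha$. What the paper's organization buys is the complete avoidance of Cantor normal forms and of the auxiliary fact that $\omega^\rho$ is multiplicatively principal exactly when $\rho$ is additively principal (which the paper only invokes afterwards, in the corollary computing $f'$); what your version buys is an explicit quantitative witness for $f(\alpha)>\alpha$ at each non-fixed limit point, at the cost of more ordinal-arithmetic bookkeeping.
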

\begin{proof}
 Assume that $f(\alpha)=\alpha$ holds. In view of $f(1)>f(0)=1$ we get $\alpha>1$. By the definition of $f$ we also see that $0<\beta<\alpha$ implies
 \begin{equation*}
  \beta+1\leq f(\beta)+1<f(\beta)+1+\beta=f(\beta+1)\leq f(\alpha)=\alpha,
 \end{equation*}
 so that $\alpha$ is a limit. We can now infer that $\alpha$ is additively principal: Consider $\beta,\gamma<\alpha$ and set $\delta:=\max\{\beta,\gamma\}$. Since $\alpha$ is a limit, we get $\delta+1<\alpha$ and then
 \begin{equation*}
  \beta+\gamma\leq f(\delta)+1+\delta=f(\delta+1)<f(\alpha)=\alpha.
 \end{equation*}
 By a straightforward induction on $\gamma$ we get $\beta\cdot\gamma\leq f(\beta+\gamma)$. Since $\alpha$ is additively principal, it follows that $\beta,\gamma<\alpha$ implies
 \begin{equation*}
  \beta\cdot\gamma\leq f(\beta+\gamma)<f(\alpha)=\alpha.
 \end{equation*}
 Now assume that $\alpha$ is a multiplicatively (and hence additively) principal limit ordinal. Then $\gamma<\alpha$ implies $1+\gamma^2<\alpha$. In the introduction we have noted that $f(\gamma)$ is bounded by $1+\gamma^2$. Hence we get
 \begin{equation*}
  f(\alpha)=\textstyle\sup_{\gamma<\alpha}f(\gamma)\leq\textstyle\sup_{\gamma<\alpha}(1+\gamma^2)\leq\alpha.
 \end{equation*}
 The inequality $\alpha\leq f(\alpha)$ is automatic, since $f$ is strictly increasing.
\end{proof}

The derivative of $f$ can now be described as follows:

\begin{corollary}
 We have $f'(\alpha)=\omega^{\omega^\alpha}$ for any ordinal $\alpha$.
\end{corollary}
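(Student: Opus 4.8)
The plan is to read off $f'$ directly from the preceding lemma. That lemma identifies the fixed points of $f$ as exactly the multiplicatively principal limit ordinals, and $f'$ is by definition the strictly increasing enumeration of this class. It therefore suffices to prove two things: that the multiplicatively principal limit ordinals are precisely the ordinals of the form $\omega^{\omega^\alpha}$, and that $\alpha\mapsto\omega^{\omega^\alpha}$ is their increasing enumeration.

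For the characterization I would first recall the standard fact that an ordinal $\delta>0$ is additively principal if and only if $\delta=\omega^\beta$ for some $\beta$, which is immediate from Cantor normal form. I would then show that the multiplicatively principal ordinals $>2$ are exactly the ordinals $\omega^{\omega^\epsilon}$. For the forward implication, suppose $\gamma>2$ is multiplicatively principal. Writing $\delta=\max\{\beta_1,\beta_2\}$ for arbitrary $\beta_1,\beta_2<\gamma$, one has $\beta_1+\beta_2\leq\delta\cdot 2<\gamma$ (using $\delta,2<\gamma$), so $\gamma$ is additively principal and hence $\gamma=\omega^\delta$ for some $\delta>0$. If $\delta$ were not additively principal we could write $\delta=\delta_1+\delta_2$ with $0<\delta_1,\delta_2<\delta$, whence $\omega^{\delta_1},\omega^{\delta_2}<\gamma$ but $\omega^{\delta_1}\cdot\omega^{\delta_2}=\omega^{\delta}=\gamma$, contradicting multiplicative principality. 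Thus $\delta$ is additively principal, i.e.\ $\delta=\omega^\epsilon$ and $\gamma=\omega^{\omega^\epsilon}$.

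For the converse I would verify directly that each $\gamma=\omega^{\omega^\epsilon}$ is multiplicatively principal: given $\beta_1,\beta_2<\gamma$, choose $\zeta_1,\zeta_2<\omega^\epsilon$ with $\beta_i<\omega^{\zeta_i}$; then $\beta_1\cdot\beta_2<\omega^{\zeta_1}\cdot\omega^{\zeta_2}=\omega^{\zeta_1+\zeta_2}$, and since $\omega^\epsilon$ is additively principal we have $\zeta_1+\zeta_2<\omega^\epsilon$, so $\beta_1\cdot\beta_2<\omega^{\omega^\epsilon}=\gamma$. Since every $\omega^{\omega^\epsilon}$ is a limit ordinal while $2$ is the only multiplicatively principal ordinal that is not a limit, the multiplicatively principal limit ordinals are precisely the ordinals $\omega^{\omega^\epsilon}$.

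It then remains to identify the enumeration. The function $\alpha\mapsto\omega^{\omega^\alpha}$ is the composition of the strictly increasing (indeed normal) function $\beta\mapsto\omega^\beta$ with itself, hence strictly increasing, and by the previous paragraph its range is exactly the class of fixed points of $f$; a strictly increasing map onto an unbounded class is that class's increasing enumeration, so $f'(\alpha)=\omega^{\omega^\alpha}$. I expect the only real content to lie in the forward direction of the characterization, and specifically in the step showing that the exponent $\delta$ in $\gamma=\omega^\delta$ must itself be additively principal; the converse and the enumeration step are routine applications of the arithmetic of $\omega$-powers.
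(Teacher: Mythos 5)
Your overall route coincides with the paper's: the preceding lemma reduces everything to identifying the multiplicatively principal limit ordinals with the ordinals of the form $\omega^{\omega^\epsilon}$, after which $f'$ is read off as the increasing enumeration of this class. The only structural difference is that the paper simply cites this identification as a known fact (Pohlers, Exercise~3.3.15), whereas you prove it from scratch; that is a legitimate, more self-contained choice, and your forward direction (first additive principality of $\gamma$, then additive principality of the exponent, via the decomposition argument) is exactly the standard proof of the cited fact. Your final step, that a strictly increasing map from the ordinals onto the class of fixed points must be its increasing enumeration, is also sound.

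One step of your converse does fail in a boundary case, though. For $\gamma=\omega^{\omega^\epsilon}$ with $\epsilon=0$, i.e.\ $\gamma=\omega$, you cannot ``choose $\zeta_i<\omega^\epsilon=1$ with $\beta_i<\omega^{\zeta_i}$'': the only candidate is $\zeta_i=0$, and $\beta_i<\omega^0=1$ fails for every $\beta_i\geq 1$. Your choice implicitly relies on the identity $\omega^{\omega^\epsilon}=\sup\{\omega^\zeta\,|\,\zeta<\omega^\epsilon\}$, which holds only when $\omega^\epsilon$ is a limit, i.e.\ when $\epsilon>0$. The case $\gamma=\omega$ needs (and of course has) a separate trivial argument: the product of two finite ordinals is finite. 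A second, harmless slip: under the paper's definition the ordinal $1$ is also multiplicatively principal and is not a limit, so $2$ is not ``the only'' such ordinal; since neither $1$ nor $2$ is a limit, this does not affect your conclusion that the multiplicatively principal limit ordinals are precisely the ordinals $\omega^{\omega^\epsilon}$.
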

\begin{proof}
 It is known that an infinite ordinal is multiplicatively principal if and only if it is of the form $\omega^{\omega^\alpha}$ (see e.\,g.~\cite[Exercise~3.3.15]{pohlers-proof-theory}). Hence the previous lemma implies that $\alpha\mapsto\omega^{\omega^\alpha}$ is the increasing enumeration of the fixed points of $f$. The claim follows by the definition of the derivative.
\end{proof}

In the rest of this section we construct a normal function $g$ with $g'(\alpha)=\omega^{1+\alpha}$. Such a function can be defined by
\begin{align*}
 g(0)&=1,\\
 g(\alpha+1)&=(\alpha+1)\cdot 2,\\
 g(\lambda)&=\textstyle\sup_{\alpha<\lambda}g(\alpha)\quad\text{for $\lambda$ limit}.
\end{align*}
By induction on the limit ordinal $\lambda$ we get
\begin{equation*}
 g(\lambda)\leq\textstyle\sup_{\alpha<\lambda}\alpha\cdot2\leq\lambda\cdot 2.
\end{equation*}
In particular we have $g(\lambda)<g(\lambda+1)$, which readily implies that $g$ is strictly increasing. We also obtain $g(\alpha)\leq1+\alpha\cdot 2$ for any ordinal $\alpha$, as promised above. To characterize the derivative of $g$ we show the following:

\begin{lemma}
 We have $g(\alpha)=\alpha$ if and only if $\alpha$ is an additively principal limit ordinal.
\end{lemma}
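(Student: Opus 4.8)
The plan is to mirror the proof of the preceding lemma for $f$, establishing both implications separately and relying on the bound $g(\alpha)\le 1+\alpha\cdot 2$ together with the fact that $g$ is strictly increasing.

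For the forward direction, I would assume $g(\alpha)=\alpha$ and first pin down that $\alpha$ is a limit ordinal with $\alpha\ge 2$: from $g(0)=1$ and $g(1)=(0+1)\cdot 2=2$ we get $\alpha\notin\{0,1\}$, and if $\alpha=\beta+1$ were a successor then $g(\alpha)=(\beta+1)\cdot 2=\alpha+\alpha>\alpha$ would contradict $g(\alpha)=\alpha$. Next I would show $\alpha$ is additively principal. Given $\beta,\gamma<\alpha$, set $\delta:=\max\{\beta,\gamma\}$; since $\alpha$ is a limit we have $\delta+1<\alpha$, and weak monotonicity of ordinal addition and multiplication yields the chain $\beta+\gamma\le\delta\cdot 2\le(\delta+1)\cdot 2=g(\delta+1)$. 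As $g$ is strictly increasing and $\delta+1<\alpha$, the right-hand side is $<g(\alpha)=\alpha$, whence $\beta+\gamma<\alpha$.

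For the backward direction, I would assume $\alpha$ is an additively principal limit ordinal. The inequality $\alpha\le g(\alpha)$ is automatic because $g$ is strictly increasing. For the reverse, I would use continuity at the limit $\alpha$ to write $g(\alpha)=\sup_{\gamma<\alpha}g(\gamma)$ and bound each term by $g(\gamma)\le 1+\gamma\cdot 2$. Since $\alpha$ is additively principal and $\gamma<\alpha$, we have $\gamma\cdot 2=\gamma+\gamma<\alpha$, and then $1+\gamma\cdot 2<\alpha$ by a second application of additive principality (using $1<\alpha$). Hence every term of the supremum is strictly below $\alpha$, so $g(\alpha)\le\alpha$, and equality follows.

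The argument is essentially routine; the only place demanding care is the ordinal arithmetic in the forward direction. In particular one must invoke the limit hypothesis precisely to secure $\delta+1<\alpha$ (so that strict monotonicity of $g$ can be applied) and handle the weak monotonicity inequalities $\beta+\gamma\le\delta\cdot 2$ and $\delta\cdot 2\le(\delta+1)\cdot 2$ without accidentally claiming strictness where only $\le$ holds. The strictness that finally delivers $\beta+\gamma<\alpha$ comes solely from $g(\delta+1)<g(\alpha)$.
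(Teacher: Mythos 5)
Your proof is correct and follows essentially the same route as the paper: rule out $0$ and successors, use $\delta=\max\{\beta,\gamma\}$ with $(\delta+1)\cdot 2=g(\delta+1)$ for additive principality, and bound $g(\alpha)$ at limits via the quadratic-free estimate $g(\gamma)\le 1+\gamma\cdot 2$ (the paper uses the slightly sharper $\sup_{\gamma<\alpha}\gamma\cdot 2$, and places the strict inequality at $\beta+\gamma<(\delta+1)\cdot 2$ rather than at $g(\delta+1)<g(\alpha)$, but these are cosmetic differences).
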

\begin{proof}
 First assume that we have $g(\alpha)=\alpha$. In view of $g(0)=1$ we get $\alpha>0$. Since $g(\gamma+1)>\gamma+1$ holds for any successor, we learn that $\alpha$ must be a limit. In order to show that $\alpha$ is additively principal we consider arbitrary ordinals $\beta,\gamma<\alpha$. Setting $\delta:=\max\{\beta,\gamma\}$, we get
 \begin{equation*}
  \beta+\gamma<(\delta+1)\cdot 2=g(\delta+1)\leq g(\alpha)=\alpha.
 \end{equation*}
 Conversely, assume that $\alpha$ is an additively principal limit ordinal. Then $\gamma<\alpha$ implies~$\gamma\cdot 2<\alpha$, which yields
 \begin{equation*}
  g(\alpha)\leq\textstyle\sup_{\gamma<\alpha}\gamma\cdot 2\leq\alpha.
 \end{equation*}
 Yet again, the inequality $\alpha\leq g(\alpha)$ is automatic.
\end{proof}

We can now describe the derivative of $g$:

\begin{corollary}
 We have $g'(\alpha)=\omega^{1+\alpha}$ for any ordinal $\alpha$.
\end{corollary}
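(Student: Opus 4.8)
The plan is to mirror the proof of the preceding corollary (the one determining $f'$): I would invoke the standard classification of additively principal ordinals and combine it with the lemma just proved. Recall that an ordinal is additively principal if and only if it is of the form $\omega^\delta$ for some ordinal $\delta$ (this is the additive analogue of the fact about multiplicatively principal ordinals used above; see e.\,g.~\cite[Exercise~3.3.15]{pohlers-proof-theory} and the surrounding material). Among these values the only non-limit one is $\omega^0=1$, while every $\omega^\delta$ with $\delta\geq 1$ is a limit ordinal. Hence the additively principal limit ordinals are exactly the ordinals of the form $\omega^\delta$ with $\delta\geq 1$.

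Next I would rewrite this family in the form announced in the statement. The map $\alpha\mapsto 1+\alpha$ is strictly increasing, and its range is precisely the class of ordinals $\delta\geq 1$ (for instance $1+0=1$, and $1+\lambda=\lambda$ for infinite limits $\lambda$). Composing with the strictly increasing map $\delta\mapsto\omega^\delta$, I conclude that $\alpha\mapsto\omega^{1+\alpha}$ is a strictly increasing enumeration of exactly the additively principal limit ordinals. By the previous lemma these are precisely the fixed points of $g$, so $\alpha\mapsto\omega^{1+\alpha}$ is the increasing enumeration of the fixed points of $g$. By the definition of the derivative this yields $g'(\alpha)=\omega^{1+\alpha}$, as claimed.

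I do not expect any genuine obstacle here; the structure is entirely parallel to the determination of $f'$. The only point requiring a moment's care is identifying which additively principal ordinals are limits---equivalently, excluding the single finite value $1=\omega^0$---and verifying that $\alpha\mapsto 1+\alpha$ surjects onto the ordinals $\geq 1$, so that the shift by $1$ in the exponent exactly accounts for omitting this one non-limit fixed point.
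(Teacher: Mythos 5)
Your proposal is correct and follows essentially the same route as the paper: classify the additively principal ordinals as those of the form $\omega^\delta$, discard the single non-limit value $\omega^0=1$, and observe that the remaining ones are enumerated by $\alpha\mapsto\omega^{1+\alpha}$, so the previous lemma gives the claim. The extra care you take in checking that $\alpha\mapsto 1+\alpha$ surjects onto the ordinals $\geq 1$ is sound but not a departure from the paper's argument.
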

\begin{proof}
 It is well-known that an ordinal is additively principal if and only if it is of the form $\omega^\alpha$ (consider Cantor normal forms). Excluding $\omega^0=1$, we see that the additively principal limit ordinals are those of the form~$\omega^{1+\alpha}$. Now the claim follows by the previous lemma.
\end{proof}

To conclude, we explain why we have used $f$ rather than $g$ to lower the base theory of~\cite[Theorem~5.9]{freund-rathjen_derivatives}: In order to represent $g$ by a normal dilator we would need uniform notation systems for the values of this function. Elements of $g(\alpha+1)$ can be written as $\beta$ or $(\alpha+1)+\beta$ with $\beta<\alpha+1$, which suggests a relativized ordinal notation system. Canonical representations for elements of $g(\lambda)$ appear less obvious when $\lambda$ is a limit. For example, the ordinal $\omega+2\in g(\omega\cdot 2)$ could be written as $(\omega+1)+1\in g(\omega+1)$, as $(\omega+2)+0\in g(\omega+2)$ or as $\omega+2\in g(\omega+3)$. It would be interesting to know whether $g$ does have a reasonable representation as a normal dilator.

\bibliographystyle{amsplain}
\bibliography{Exp-deriv}

\newcommand{\noopsort}[1]{}
\providecommand{\bysame}{\leavevmode\hbox to3em{\hrulefill}\thinspace}
\providecommand{\MR}{\relax\ifhmode\unskip\space\fi MR }
\providecommand{\MRhref}[2]{%
  \href{http://www.ams.org/mathscinet-getitem?mr=#1}{#2}
}
\providecommand{\href}[2]{#2}
\begin{thebibliography}{10}

\bibitem{aczel-phd}
Peter Aczel, \emph{Mathematical {P}roblems in {L}ogic}, Ph{D} thesis, Oxford,
  1966.

\bibitem{aczel-normal-functors}
\bysame, \emph{Normal functors on linear orderings}, Journal of Symbolic Logic
  \textbf{32} (1967), p.~430, abstract to a paper presented at the annual
  meeting of the Association for Symbolic Logic, Houston, Texas, 1967.

\bibitem{freund-thesis}
Anton Freund, \emph{Type-{T}wo {W}ell-{O}rdering {P}rinciples, {A}dmissible
  {S}ets, and ${\Pi}^1_1$-{C}omprehension}, Ph{D} thesis, University of Leeds,
  2018, available via \url{http://etheses.whiterose.ac.uk/20929/}.

\bibitem{freund-computable}
\bysame, \emph{Computable aspects of the {B}achmann-{H}oward principle},
  Journal of Mathematical Logic \textbf{20} (2020), no.~2, article no.~2050006,
  26 pp.

\bibitem{freund-rathjen_derivatives}
Anton Freund and Michael Rathjen, \emph{Derivatives of normal functions in
  reverse mathematics}, Annals of Pure and Applied Logic \textbf{172} (2021),
  no.~2, article no.~102890, 49 pp.

\bibitem{girard-pi2}
Jean-Yves Girard, \emph{${\Pi^1_2}$-logic, part 1: Dilators}, Annals of Pure
  and Applied Logic \textbf{21} (1981), 75--219.

\bibitem{girard87}
\bysame, \emph{{P}roof {T}heory and {L}ogical {C}omplexity, {V}olume {I}},
  Studies in Proof Theory, Bibliopolis, Napoli, 1987.

\bibitem{hirst94}
Jeffry~L. Hirst, \emph{Reverse mathematics and ordinal exponentiation}, Annals
  of Pure and Applied Logic \textbf{66} (1994), 1--18.

\bibitem{hirst99}
\bysame, \emph{Ordinal inequalities, transfinite induction, and reverse
  mathematics}, The Journal of Symbolic Logic \textbf{64} (1999), no.~2,
  769--774.

\bibitem{maclane-working}
Saunders {Mac Lane}, \emph{Categories for the {W}orking {M}athematician},
  2\textsuperscript{nd} ed., Graduate Texts in Mathematics, vol.~5, Springer,
  1998.

\bibitem{marcone-montalban}
Alberto Marcone and Antonio Montalb{\'a}n, \emph{The {V}eblen functions for
  computability theorists}, Journal of Symbolic Logic \textbf{76} (2011),
  575--602.

\bibitem{pohlers-proof-theory}
Wolfram Pohlers, \emph{Proof {T}heory. {T}he {F}irst {S}tep into
  {I}mpredicativity}, Springer, Berlin, 2009.

\bibitem{rathjen-model-bi}
Michael Rathjen and Pedro Francisco~Valencia Vizca\'{i}no, \emph{Well ordering
  principles and bar induction}, Gentzen's centenary: The quest for consistency
  (Reinhard Kahle and Michael Rathjen, eds.), Springer, Berlin, 2015,
  pp.~533--561.

\bibitem{schuette77}
Kurt Sch{\"u}tte, \emph{Proof {T}heory}, Grundlehren der Mathematischen
  Wissenschaften, vol. 225, Springer, Berlin, 1977.

\bibitem{simpson09}
Stephen~G. Simpson, \emph{Subsystems of {S}econd {O}rder {A}rithmetic},
  Perspectives in Logic, Cambridge University Press, 2009.

\end{thebibliography}

\end{document}